\newtheorem{thm}{Theorem}[section]
\newtheorem{lem}[thm]{Lemma}
\newtheorem{prop}[thm]{Proposition}
\newtheorem{cor}[thm]{Corollary}
\newtheorem{exa}[thm]{Example}
\newtheorem{rema}[thm]{Remark}
\DeclareMathOperator{\Hom}{Hom}%
\DeclareMathOperator{\Ext}{Ext}%
\DeclareMathOperator{\undim}{\underline\dim}
\newcommand{\cA}{\ensuremath{{\mathcal{A}}}}
\newcommand{\cB}{\ensuremath{{\mathcal{B}}}}
\newcommand{\cC}{\ensuremath{{\mathcal{C}}}}
\newcommand{\cD}{\ensuremath{{\mathcal{D}}}}
\newcommand{\cE}{\ensuremath{{\mathcal{E}}}}
\newcommand{\then}{\Rightarrow}
 \newcommand{\onto}{\twoheadrightarrow}
 \newcommand{\cof}{\rightarrowtail}
\DeclareMathOperator{\End}{End}%
\newcommand{\field}[1]{\mathbb{#1}}
\newcommand{\PP}{\ensuremath{{\field{P}}}}
\newcommand{\cP}{\ensuremath{{\mathcal{P}}}}
\title{Bijection between positive clusters and projectively signed exceptional sequences}
\author{Shujian Chen and Kiyoshi Igusa}
\keywords{Positive clusters; relatively projective; relatively injective; exceptional sequences}
\subjclass[2020]{
16G20}
\begin{document}

\begin{abstract}
    In 2017, Igusa and Todorov gave a bijection between signed exceptional sequences and ordered partial clusters. In this paper, we show that every term in an exceptional sequence is either relatively projective or relatively injective or both and we refine this bijection to one between projectively signed exceptional sequences and ordered partial positive clusters. We also give a characterization of relatively projective/injective objects in terms of supports of the objects in the exceptional sequence.
\end{abstract}

\maketitle

\tableofcontents

\section*{Introduction}

Exceptional sequences have become an active area of research due to several new uniform proofs for the enumeration formulas for exceptional sequences for Dynkin quivers. See \cite{M}, \cite{CD}, \cite{D}. The question of which objects in an exceptional sequence are relatively projective has become important after the result of \cite{IT13} where a bijection is given between signed exceptional sequences (where relatively projective objects are allowed to be ``negative'') and ordered clusters. This has been generalized to the $\tau$-tilting case in \cite{BuanMarsh}. See also \cite{BM} and \cite[Theorem 1.19]{CI} for a combinatorial version of this.

In \cite{IS} it was shown that, for complete exceptional sequences over $A_n$ with straight orientation, every object is either relatively projective or relatively injective. Theo Douvropoulos explained to us that this statement should be true more generally and challenged us to prove it.

In the finite type case (when $\Lambda=KQ$ for a Dynkin quiver $Q$),
this result follows from \cite{BM} and \cite{CI} using the correspondence between exceptional sequences and minimal factorizations of the Coxeter element of the Weyl group given in \cite{IT}. In \cite{BM}, it is shown that for a covering relation $v < w$ in the absolute order, either $v^{-1}w$ or $K(w)^{-1}K(v)$ is a one-way reflection, i.e., agrees with the Bruhat order, where $K(v)=v^{-1}c$, and in \cite{CI} it is shown that one-way reflections in a minimal factorization correspond to relative projective representations in the complete exceptional sequence.

Throughout this paper, $\Lambda$ will be an hereditary algebra which is finite dimensional over some field. We will prove the following theorem. The terms are defined below.

\begin{thm}[Theorem \ref{thm: all terms are rel proj or rel inj}]\label{A}
    Let $(E_1,\cdots,E_k,\cdots, E_n)$ be a complete exceptional sequence over $\Lambda$. Then every object $E_k$ is either relatively projective or relatively injective (or both).
\end{thm}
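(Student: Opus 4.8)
The plan is to translate relative projectivity and relative injectivity into a single $\Ext^1$‑vanishing condition and then reduce that to a combinatorial statement about supports. Unwinding the terms: $E_k$ is the last term of a complete exceptional sequence in a wide subcategory $\cW^{+}$ and the first term of a complete exceptional sequence in a wide subcategory $\cW^{-}$, where (with the convention $\Hom(E_i,E_j)=\Ext^1(E_i,E_j)=0$ for $i>j$) $\cW^{+}=E_{k+1}^{\perp}\cap\cdots\cap E_{n}^{\perp}$ and $\cW^{-}={}^{\perp}E_{1}\cap\cdots\cap{}^{\perp}E_{k-1}$; by definition $E_k$ is relatively projective iff it is a projective object of $\cW^{+}$ and relatively injective iff it is an injective object of $\cW^{-}$. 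Both $\cW^{\pm}$ are module categories of finite‑dimensional hereditary algebras and are closed under extensions in $\Lambda$‑mod, so $\Ext^1$ computed in them agrees with $\Ext^1_{\Lambda}$; using this together with $\Ext^2_{\Lambda}=0$ and dévissage one gets: $E_k$ is relatively projective iff $\Ext^1_{\Lambda}(E_k,T)=0$ for every simple object $T$ of $\cW^{+}$, and relatively injective iff $\Ext^1_{\Lambda}(U,E_k)=0$ for every simple object $U$ of $\cW^{-}$. So it suffices to show one cannot simultaneously have a simple object $T$ of $\cW^{+}$ with $\Ext^1_{\Lambda}(E_k,T)\neq0$ and a simple object $U$ of $\cW^{-}$ with $\Ext^1_{\Lambda}(U,E_k)\neq0$.

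The key input is a description of the simple objects of these perpendicular categories in terms of the exceptional sequence — the support characterization announced in the abstract. I would first treat $\Lambda=KQ$ and, using Schofield's description of the perpendicular category $M^{\perp}$ of an exceptional module $M$, compute inductively — peeling off $E_n,E_{n-1},\dots,E_{k+1}$ one at a time — the Gabriel quiver of $\cW^{+}$ and the supports of its simple objects; completeness of $(E_1,\dots,E_n)$ is exactly what forces these categories to have the expected ranks and their simples to stay ``supported inside'' the union of the $\supp E_j$. The outcome should be that $\Ext^1_{\Lambda}(E_k,T)\neq0$ for a simple $T$ of $\cW^{+}$ imposes a precise incidence condition relating $\supp E_k$, the later supports $\supp E_{k+1},\dots,\supp E_n$, and the arrows of $Q$, whereas $\Ext^1_{\Lambda}(U,E_k)\neq0$ for a simple $U$ of $\cW^{-}$ imposes the mirror condition with the earlier supports — and that the two conditions are mutually exclusive. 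To verify exclusivity cleanly I would splice the non‑split extensions $0\to T\to Z\to E_k\to0$ (inside $\cW^{+}$, so $Z\in\cW^{+}$) and $0\to E_k\to W\to U\to0$ (inside $\cW^{-}$, so $W\in\cW^{-}$) into a four‑term exact sequence $0\to T\to Z\to W\to U\to0$; its Yoneda class lies in $\Ext^2_{\Lambda}(U,T)=0$, which forces the restriction maps $\Ext^1_{\Lambda}(W,T)\to\Ext^1_{\Lambda}(E_k,T)$ and $\Ext^1_{\Lambda}(U,Z)\to\Ext^1_{\Lambda}(U,E_k)$ to be surjective, and combining this with $Z\in\cW^{+}$, $W\in\cW^{-}$ and the support patterns above produces the required contradiction. (An alternative that sidesteps listing the simples is an induction along the braid‑group action on complete exceptional sequences: the property is immediate for a complete exceptional sequence of indecomposable projectives, where every term is already projective in $\Lambda$‑mod hence in the relevant $\cW^{+}$, and at an elementary mutation $\sigma_i$ only positions $i$ and $i+1$ need attention, since the wide subcategories governing the other positions are mutation‑invariant.)

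The main obstacle is carrying out the description of the perpendicular categories uniformly: over a non‑algebraically‑closed field, and in infinite or wild representation type, there are no interval‑module coordinates and no finiteness to lean on, so the supports, the arrows between them, and the division rings $\End(E_i)$ all have to be tracked intrinsically, and understanding how the simple objects (and the Gabriel quiver) of a perpendicular category sit relative to those of $\Lambda$ is the technically demanding step. The $\Ext^2$‑vanishing / Yoneda‑splicing argument is formal and robust; it only becomes decisive once one knows exactly which simple objects occur in $\cW^{+}$ and $\cW^{-}$, and assembling that list for an arbitrary complete exceptional sequence over an arbitrary finite‑dimensional hereditary algebra is precisely where completeness of the sequence is used.
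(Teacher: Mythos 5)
There is a genuine gap: the entire weight of your argument rests on the ``support characterization'' of the simple objects of $\cW^{+}$ and $\cW^{-}$ and on the claimed mutual exclusivity of the two incidence conditions, and none of that is carried out --- you yourself flag it as the technically demanding step and describe only what the outcome ``should be.'' As written, the Yoneda--splicing step cannot close the argument on its own, because your two test objects are a simple $T$ of $\cW^{+}=\cA_k$ with $\Ext(E_k,T)\neq0$ and a simple $U$ of $\cW^{-}=\cB_k$ with $\Ext(U,E_k)\neq0$, and there is no a priori orthogonality between such a $T$ and such a $U$ (note that $\Ext(E_k,T)\neq 0$ forces $T\notin E_k^{\perp}$, so $T$ need not lie in $\cA_{k-1}=\cB_k^{\perp}$); the nonsplit four-term splice therefore contradicts nothing until the missing support analysis is supplied, and it is not supplied.

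The paper's proof shows that this analysis is unnecessary: the right test objects come from almost split sequences rather than from simple objects. Lemma \ref{lem: B is in C perp} shows that the middle term $X$ of the almost split sequence $A\to X\to E_k$ in $\cA_k$ automatically lies in $E_k^{\perp}$, hence in $\cA_{k}\cap E_k^\perp=\cA_{k-1}=\cB_k^{\perp}$ (Lemma \ref{lem: when is Ek relatively projective}), and dually the middle term $Y$ of the almost split sequence starting at $E_k$ in $\cB_k$ lies in $\cB_{k+1}\subset\cB_k$. This places $X$ and $Y$ in mutually orthogonal wide subcategories, so $\Ext(Y,X)=0$; the pushout construction (essentially your four-term splice, with $\Ext^2_\Lambda=0$ entering as right exactness of $\Ext$) then yields a nonsplit short exact sequence $X\to E_k\oplus Z\to Y$, giving $\Ext(Y,X)\neq 0$ and an immediate contradiction. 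So your formal mechanism is the correct one, but the key idea you are missing is Lemma \ref{lem: B is in C perp}: it supplies the needed orthogonality for free and eliminates the Schofield-style classification of simples of the perpendicular categories, which is exactly where your proposal stalls.
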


By \cite[Theorem 4.3]{IM}, this theorem also holds for complete exceptional collections which are unordered exceptional sequences since relative projectiveness and relative injectiveness are independent of the order of the terms $E_k$.

Based on this theorem, Theo then pointed out that assigning weight $2$ to terms that are not relatively injective will give rise to a subset of the signed exceptional sequences which is equinumerous to the positive Catalan number based on his computations.

We name this subset of signed exceptional sequences, \emph{projectively signed exceptional sequences} and we show that there's a bijection between projectively signed exceptional sequences and ordered positive partial clusters. This refines that bijection between signed exceptional sequences and ordered partial clusters by Igusa and Todorov \cite{IT13} to ordered positive partial clusters

\begin{thm}[Corollary \ref{cor: projectively signed exc seq give positive partial clusters}]
The bijection between signed exceptional sequences and ordered partial clusters sends a signed exceptional sequences $(E_k,\cdots,E_n)$ to an ordered positive partial clusters $(T_k,\cdots,T_n)$ in $mod\text-\Lambda$ if and only if the exceptional sequence is projectively signed.
\end{thm}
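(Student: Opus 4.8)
\emph{Proof strategy.} The plan is to deduce the statement from a precise description, under the Igusa--Todorov bijection $\Phi$ of \cite{IT13}, of which entries of the associated ordered partial cluster are honest modules. Write $\Phi(E_k,\dots,E_n)=(T_k,\dots,T_n)$, and let $\varepsilon_j\in\{+,-\}$ denote the sign carried by $E_j$. I claim the following \emph{Key Lemma}: for every $j$, the object $T_j$ is a shifted projective object of the cluster category (equivalently, $T_j\notin mod\text-\Lambda$) if and only if $\varepsilon_j=-$ and $E_j$ is relatively injective in the exceptional sequence $(E_k,\dots,E_n)$. Granting this, the theorem is immediate: $(T_k,\dots,T_n)$ is a positive partial cluster, i.e.\ every $T_j$ lies in $mod\text-\Lambda$, precisely when there is no index $j$ with $\varepsilon_j=-$ and $E_j$ relatively injective. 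Since a minus sign may only be placed on a relatively projective term, this is exactly the requirement that every term carrying a minus sign is relatively projective but not relatively injective; by Theorem~\ref{A} the condition ``not relatively injective'' already forces ``relatively projective'', and this is precisely the definition of a projectively signed exceptional sequence. Both implications of the theorem are thus obtained at once.

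To prove the Key Lemma I would induct on the length $n-k+1$, following the recursive definition of $\Phi$ in \cite{IT13}. The base case of a single term $\varepsilon E$ is immediate: $+E$ is sent to the module $E$, while $-E$ is sent to the shifted projective object of the cluster category of the rank-one wide subcategory $\add E$, inside which $E$ is simultaneously relatively projective and relatively injective, so the claimed equivalence is vacuously true. For the inductive step one peels off an extreme term, say $\varepsilon_nE_n$. Then $(\varepsilon_kE_k,\dots,\varepsilon_{n-1}E_{n-1})$ is a signed exceptional sequence in the perpendicular wide subcategory $\cW=E_n^{\perp}$, the inductive hypothesis produces an ordered partial cluster of $\cC(\cW)$ whose entries are classified as modules or shifted projectives by the Key Lemma applied inside $\cW$, and $\Phi$ transports this to an ordered partial cluster of the ambient cluster category, simultaneously adjoining the new entry $T_n$ built from $\varepsilon_nE_n$, via the cluster-category reduction/lifting attached to $E_n$. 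For the adjoined term one checks directly that $T_n$ is a shifted projective exactly when $\varepsilon_n=-$; and since $E_n$ is the \emph{last} term, the wide subcategory ${}^{\perp}\{E_k,\dots,E_{n-1}\}$ that computes its relative injectivity is $\add E_n$, so $E_n$ is automatically relatively injective and the $j=n$ case of the Key Lemma correctly reads ``$\varepsilon_n=-$''.

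The heart of the induction is the fate of the remaining entries $T_j$ for $j<n$ under the lifting. The delicate point is that the lifting does \emph{not} respect the module versus shifted-projective dichotomy entry-by-entry: a shifted projective of $\cC(E_n^{\perp})$ may lift to an honest module of the ambient cluster category, and conversely --- a phenomenon visible already in rank two. One has to prove that this happens for $T_j$ exactly when $E_j$, which is relatively injective with respect to the truncated sequence in $E_n^{\perp}$, ceases to be relatively injective with respect to the full sequence in $mod\text-\Lambda$. This comparison is the main obstacle. The instrument for settling it is the support characterization of relatively projective and relatively injective objects proved earlier in the paper: it expresses relative injectivity of $E_j$, with respect to either ambient, as an explicit condition on $\supp E_j$ and on $\supp E_{j+1},\dots,\supp E_n$, and it matches the gain or loss of relative injectivity on enlarging $E_n^{\perp}$ to $mod\text-\Lambda$ against the way the cluster-category reduction converts the shifted projective witnessing $\varepsilon_j=-$ into a module, which is controlled by whether the support of the relevant relative projective meets $\supp E_n$. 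Pushing this bookkeeping through the recursion --- Bongartz-type completions on the module side, the corresponding reductions on the shifted-projective side --- yields the inductive step, and hence the Key Lemma and the theorem.
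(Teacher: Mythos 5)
Your reduction of the theorem to the ``Key Lemma'' is correct, and the Key Lemma itself is the right statement: it is precisely the term-by-term conjunction of properties (a) and (b) of Theorem~\ref{thm: bijection for m-clusters} specialized to $m=1$ (if $T_j$ is a shifted projective then $E_j$ is negative, relatively projective, and --- by the support argument with the linear condition $(\ast)$ --- relatively injective; conversely if $E_j$ is negative and relatively injective, hence both relatively projective and relatively injective, then $T_j$ has the same sign). Your deduction of the theorem from it, including the appeal to Theorem~\ref{A} to identify ``not relatively injective'' with ``relatively projective but not relatively injective,'' matches the paper's short derivation of Corollary~\ref{cor: projectively signed exc seq give positive partial clusters} from Corollary~\ref{cor: projectively colored exc seq give positive m-clusters}. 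Your plan to prove the Key Lemma by induction along the recursive construction of the bijection, peeling off $E_n$ and reducing to $E_n^{\perp}$, is also the paper's architecture.

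The gap is exactly where you say the ``main obstacle'' lies, and it is not closed. You correctly observe that the lifting $\sigma_{T[k]}:\cE^m(T^{\perp})\to\cE^{T[k]}$ does not preserve the module/shifted-projective dichotomy entry-by-entry, and you assert that the discrepancy is governed by loss of relative injectivity upon passing from $E_n^{\perp}$ to $mod\text-\Lambda$ --- but the proposal never proves this; it defers it to ``bookkeeping'' with the support characterization. This is the actual content of the result. The paper supplies it as Proposition~\ref{prop: Y projective means X rel proj and inj}: in the braid mutation $(X,T)\to(T,Y)$ underlying $\sigma_{T[k]}$, (i) if the ambient object $Y$ is a projective module $P_k$, then $k$ lies in no other support, so Theorem~\ref{thm: characterize rel proj-rel inj objects as roots} forces $X$ to be projective in $T^{\perp}$ with \emph{unchanged} level; and (ii) if the level drops, the mutation is realized by a short exact sequence $0\to X\to T^p\to Y\to 0$, which prevents $X$ from being relatively injective, whence $X$ is relatively projective by Theorem~\ref{A} and hence projective in $T^{\perp}$. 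These two facts are what make the induction for properties (a)--(c) of Theorem~\ref{thm: bijection for m-clusters} go through, and neither is a routine support computation --- (ii) in particular uses Theorem~\ref{A} in an essential way. Until you prove statements of this kind, the inductive step of your Key Lemma, and therefore the theorem, is not established.
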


Furthermore, Igusa generalized the bijection between signed exceptional suquences and ordered clusters to $m$-exceptional sequences and $m$-clusters in \cite{mExcSeq}. Combining with it, we are able to prove the refinement in the generalization of $m$-exceptional sequences and $m$-clusters.

\begin{thm}[Corollary \ref{cor: projectively colored exc seq give positive m-clusters}]
The bijection between $m$-exceptional sequences of length $t$ and $t$-tuples of compatible objects of $\cC^m(\Lambda)$ sends an $m$-exceptional sequences 
\[
(E_s,\cdots,E_n)
\]
to a $t$-tuple 
\[
(T_s,\cdots,T_n)
\]
of compatible objects in $\bigcup_{0\le j<m} mod\text-\Lambda[j]$ if and only if the $m$-exceptional sequence contains no relatively injective terms with level $m$.
\end{thm}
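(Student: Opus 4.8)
The plan is to derive the general case from the case $m=1$, which is Corollary~\ref{cor: projectively signed exc seq give positive partial clusters}, by induction on the length $t$, running the inductive step through the recursive construction of the bijection in \cite{mExcSeq}. Recall that this bijection peels one term $E$ off the $m$-exceptional sequence at a time: one passes to the perpendicular category that $E$ determines, which is again an $m$-cluster category $\cC^m(\Lambda')$ of a hereditary algebra $\Lambda'$ of rank one less, treats the shorter $m$-exceptional sequence over $\Lambda'$ by the inductive construction, transports the resulting $(t-1)$-tuple back into $\cC^m(\Lambda)$ along the inclusion of the perpendicular category, and reconstructs the remaining coordinate $T$ directly from $E$ and its level $\ell(E)$.

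First I would record that the inclusion $\cC^m(\Lambda')\hookrightarrow\cC^m(\Lambda)$ of the perpendicular category respects the decomposition of $\cC^m$ into its ``module part'' $\bigcup_{0\le j<m}mod\text-\Lambda[j]$ and its shifted projectives: perpendicular categories of exceptional objects in $D^b(mod\text-\Lambda)$ are generated by honest modules, so the inclusion, being exact and commuting with the shift, carries $mod\text-\Lambda'[j]$ into $mod\text-\Lambda[j]$ for each $0\le j<m$ and shifted projectives to shifted projectives. Consequently the transported $(t-1)$-tuple lies in $\bigcup_{0\le j<m}mod\text-\Lambda[j]$ exactly when it already does so as a tuple in $\cC^m(\Lambda')$, and, by the inductive hypothesis over $\Lambda'$, exactly when the shorter $m$-exceptional sequence contains no relatively injective term of level $m$. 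Everything thus reduces to the single peeled term $E$.

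The substantive point is the local analysis of $E$ and its image $T$. By Theorem~\ref{A}, $E$ is relatively projective or relatively injective (or both) inside the exceptional sequence, and a term of level $m$ is necessarily relatively projective --- this being exactly the condition under which level $m$ is admissible, in parallel with the requirement that a negatively signed term of a signed exceptional sequence be relatively projective. I would then use the characterization of relatively projective and relatively injective objects by their supports to establish the dichotomy: if $\ell(E)<m$ then $T$ always lies in $\bigcup_{0\le j<m}mod\text-\Lambda[j]$, whereas if $\ell(E)=m$ then $T$ is a shifted projective $P_i[m]$, lying outside $\bigcup_{0\le j<m}mod\text-\Lambda[j]$, precisely when $E$ is in addition relatively injective, and is a module of some level $j<m$ otherwise. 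For $m=1$ this dichotomy is exactly the content of Corollary~\ref{cor: projectively signed exc seq give positive partial clusters}; for general $m$ it is the same analysis carried out inside the $m$-cluster category with the level of $E$ retained. Since relative projectivity and relative injectivity of a term depend only on the term and the (unordered) exceptional collection it belongs to --- not on the ordering, by \cite[Theorem 4.3]{IM} --- and are compatible with restriction to the perpendicular category of the peeled term, via the identification of the relevant wide subcategories, while the level of a term is simply inherited, combining this dichotomy with the reduction of the previous paragraph closes the induction: $(T_s,\dots,T_n)$ lies in $\bigcup_{0\le j<m}mod\text-\Lambda[j]$ if and only if $E$ is not a relatively injective term of level $m$ and the shorter sequence contains no relatively injective term of level $m$, i.e.\ if and only if $(E_s,\dots,E_n)$ contains no such term.

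I expect the genuine obstacle to be this dichotomy: pinning down, for a level-$m$ term, exactly when the reconstruction in \cite{mExcSeq} outputs a top-level shifted projective rather than a module of lower level, and matching this up with relative injectivity. This is a statement about how the level of a term interacts with the Auslander--Reiten translate in the orbit category $\cC^m(\Lambda)$, and the role of the support criterion is to reduce it to a finite, local computation involving $E$ and the neighbouring terms that carve out its ambient wide subcategory; arranging that this computation reproduces the $m=1$ case of Corollary~\ref{cor: projectively signed exc seq give positive partial clusters} together with the extra level bookkeeping is where the real work lies. A secondary and more routine matter is the reduction of partial $m$-exceptional sequences to complete ones and the reconciliation of the two perpendicular-category conventions, both handled as in \cite{IT13} and \cite{mExcSeq}.
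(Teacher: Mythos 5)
Your overall shape---induct on the length and isolate, term by term, when the level can drop---is the right one, and your stated dichotomy (for a level-$m$ term $E$, the partner $T$ is a top-level shifted projective precisely when $E$ is also relatively injective, and otherwise falls into $mod\text-\Lambda[m-1]$) is exactly the content of the paper's argument. But the reduction you build around it fails. The bijection of \cite{mExcSeq} peels off the \emph{last} term $E_n=T_n$ and transports the remaining $(t-1)$-tuple between $\cC^m(T^\perp)$ and $\cC^m(\Lambda)$ not ``along the inclusion of the perpendicular category'' but via the Key Lemma bijection $\sigma_{T[k]}$ of Lemma \ref{key lemma}, which braid-mutates every object not already compatible with $T[k]$ and sends $X[i]$ to some $Y[j]$ with $j\in\{i,i-1\}$. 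This map does \emph{not} preserve levels or membership in $\bigcup_{0\le j<m}mod\text-\Lambda[j]$: a level-$m$ shifted projective of $T^\perp$ whose underlying module is not projective over $\Lambda$ is forced down to a module in level $m-1$. So your claim that ``the transported $(t-1)$-tuple lies in $\bigcup_{0\le j<m}mod\text-\Lambda[j]$ exactly when it already does so as a tuple in $\cC^m(\Lambda')$'' is false, and its failure is precisely the phenomenon the corollary is about: it occurs exactly when the corresponding exceptional term is relatively projective but not relatively injective. Meanwhile the one term you do analyze locally, the peeled term, satisfies $T_n=E_n$ and is trivial, so the real work has been displaced onto the transported terms, where your argument says nothing. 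A second, acknowledged, gap is that the dichotomy itself is asserted rather than proved---you say yourself this is ``where the real work lies.''

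The paper closes both gaps by arranging the induction differently: it drops the \emph{first} term, so that $(E_{s+1},\dots,E_n)$ and $(T_{s+1},\dots,T_n)$ still correspond under the same bijection over $\Lambda$ and no transport between cluster categories is needed, and then settles the dichotomy for $E_s$ versus $T_s$ using the global properties of Theorem \ref{thm: bijection for m-clusters}. Concretely: if $T_s=P_i[m]$, compatibility forces $i$ out of the supports of all $T_j$ with $j>s$, hence (by the linear condition (1)) out of the supports of all $E_j$ with $j>s$, while $i\in\supp E_s$; so $E_s$ is not covered by the later terms, is therefore relatively injective, and has level $m$ by property (a). Conversely, a relatively injective term of level $m$ is also relatively projective (level $m$ is only admissible for relatively projective terms), hence is both, and property (b) then forces $T_s$ to have level $m$ as well. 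If you want to salvage your route through $\sigma_{T[k]}$, you would need Proposition \ref{prop: Y projective means X rel proj and inj} together with a precise statement of how relative injectivity and levels transform under that bijection---which amounts to re-deriving properties (a) and (b) rather than bypassing them.
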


Using the characterization of relatively projective and relatively injective objects in a complete exceptional sequence (given in Section \ref{ss: char of rel pro and rel inj}), we obtain a result about probability distribution of objects that are both relatively projective and relatively injective for all finite type cases.

\begin{cor}[Corollary \ref{cor: probability of object being both rel proj and rel inj}]
    For a random complete exceptional sequence, the probability that the $k$th term is both relatively projective and relatively injective is $2/h$ where $h$ is the Coxeter number of $\Lambda$.
\end{cor}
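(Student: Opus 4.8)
Since the statement refers to the Coxeter number, we are in finite type: $\Lambda=KQ$ for a Dynkin quiver $Q$ on $n$ vertices, with Weyl group $W$ and Coxeter number $h$. By the bijection of \cite{IT} between complete exceptional sequences over $\Lambda$ and reduced length‑$n$ reflection factorisations of the Coxeter element $c\in W$, there are exactly $N=n!\,h^{n}/|W|$ complete exceptional sequences, so it is enough to prove, for each fixed $k$, that the number of complete exceptional sequences $(E_1,\dots,E_n)$ with $E_k$ both relatively projective and relatively injective is $\tfrac2h N$. Because relative projectivity and injectivity are independent of the order, the characterisation of Section~\ref{ss: char of rel pro and rel inj} says that $E_k$ is relatively projective iff $\Ext^1(E_k,E_j)=0$ for all $j$, and relatively injective iff $\Ext^1(E_j,E_k)=0$ for all $j$; thus $E_k$ is ``both'' precisely when it has no extension, in either direction, with any other term. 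Since Theorem~\ref{A} guarantees every term is relatively projective or relatively injective, the events ``relatively projective only'' and ``relatively injective only'' are disjoint and exhaust the failures of ``both'', whence for every $k$
\[
\Pr\bigl[E_k\text{ both}\bigr]=\Pr\bigl[E_k\text{ rel.\ proj.}\bigr]+\Pr\bigl[E_k\text{ rel.\ inj.}\bigr]-1,
\]
so the claim is equivalent to $\Pr[E_k\text{ rel.\ proj.}]+\Pr[E_k\text{ rel.\ inj.}]=\tfrac{h+2}{h}$ for all $k$.

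To evaluate this I would use the recursive structure of exceptional sequences. In the basis $\beta_i=\undim E_i$ the Euler form is unipotent upper triangular, so in finite type the off–diagonal pairings $\langle\beta_i,\beta_j\rangle$ lie in $\{-1,0,1\}$, the conditions on $E_k$ above are read off the relevant rows and columns of that matrix, and everything is encoded in the factorisation $c=s_{\beta_1}\cdots s_{\beta_n}$. Peeling off the first term $E_1$, whose complement $(E_2,\dots,E_n)$ is a complete exceptional sequence of ${}^{\perp}E_1\simeq\mathrm{mod}\,\Gamma$ for a smaller Dynkin quiver $\Gamma$, or dually peeling off the last term $E_n$ with $(E_1,\dots,E_{n-1})$ a complete exceptional sequence of $E_n^{\perp}$, one sets up an induction on $n$ which expresses $\#\{(E_1,\dots,E_n):E_k\text{ both}\}$ as a sum, over the rank‑$(n-1)$ reflection subgroups of $W$ arising as $W(E_i^{\perp})$ (equivalently over the possible perpendicular categories), of counts of the form $m!\,h(\Gamma)^{m}/|W(\Gamma)|$. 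Carrying this through reduces the corollary to a family of identities among Coxeter numbers and group orders of $W$ and its parabolic subgroups; in type $A$ these are instances of Abel's binomial identity, and in the remaining types they are a finite check.

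The part I expect to be the genuine obstacle is the independence of $k$. It is \emph{not} true that $\Pr[E_k\text{ rel.\ proj.}]$ alone is $k$‑independent—$E_n$ is always relatively projective, while $E_1$ need not be—so the point is exactly that the balanced quantity $\Pr[E_k\text{ rel.\ proj.}]+\Pr[E_k\text{ rel.\ inj.}]$, i.e. the ``both''–probability, is uniform. I would get the reflection symmetry $k\leftrightarrow n+1-k$ from the duality $\Lambda\leftrightarrow\Lambda^{\mathrm{op}}$, which sends $(E_1,\dots,E_n)$ to $(DE_n,\dots,DE_1)$ and interchanges relative projectivity in position $k$ with relative injectivity in position $n+1-k$; for the remaining shift‑by‑one symmetry I would look for a mutation of exceptional sequences carrying $\{E_k\text{ both}\}$ bijectively onto $\{E_{k+1}\text{ both}\}$, which should be available because, when $E_k$ is ``both'', the objects $E_k$ and $E_{k+1}$ have no extensions in either direction and the relevant Hurwitz move is correspondingly controlled. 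Together with the computation of the previous paragraph—whose evaluation to exactly $2/h$ is the secondary difficulty—this yields $\Pr[E_k\text{ both}]=2/h$ for every $k$.
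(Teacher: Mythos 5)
Your reduction rests on a characterization that is false. You assert that $E_k$ is relatively projective iff $\Ext(E_k,E_j)=0$ for all $j$ and relatively injective iff $\Ext(E_j,E_k)=0$ for all $j$, so that ``both'' means Ext-orthogonality with the other terms. Relative projectivity is projectivity in the wide subcategory $\cA_k=(E_{k+1}\oplus\cdots\oplus E_n)^\perp$, which contains many objects besides the $E_j$, so Ext-vanishing against the terms of the sequence is far too weak. Concretely, over $A_2$ with quiver $1\to 2$ take the complete exceptional sequence $(P_1,S_1)$: here $\Ext(P_1,S_1)=\Ext(S_1,P_1)=0$, yet $E_2=S_1$ is not relatively projective (for the last term this means being a projective module, which $S_1$ is not); it is covered by $P_1$ and is only relatively injective. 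The correct criterion, which the paper proves in Theorem \ref{thm: characterize rel proj-rel inj objects as roots} and Corollary \ref{cor: characterize rel proj-rel inj objects as shifts of projective objects} and which the whole argument hinges on, is the \emph{support} condition: $E_k$ is both relatively projective and relatively injective iff some vertex of its support lies in no other $E_j$, equivalently iff $(E_1,\cdots,\widehat{E_k},\cdots,E_n,P_i)$ is a complete exceptional sequence for some projective $P_i$. (A smaller symptom of the same confusion: you state that $E_n$ is always relatively projective and $E_1$ need not be; it is the other way around --- $E_1$ is always relatively projective and $E_n$ is always relatively injective.)

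Beyond this, the remainder is a program rather than a proof: the inclusion--exclusion step $\Pr[\text{both}]=\Pr[\text{rp}]+\Pr[\text{ri}]-1$ is fine (it only uses Theorem \ref{thm: all terms are rel proj or rel inj}), but the inductive peeling over perpendicular categories is not carried out, the ``identities among Coxeter numbers'' are neither stated nor verified, and the crucial $k$-independence is deferred to a mutation you do not construct. The paper's proof is a two-line bijection: Corollary \ref{cor: characterize rel proj-rel inj objects as shifts of projective objects} identifies, for fixed $k$, the sequences in which $E_k$ is both relatively projective and relatively injective with the sequences whose last term is projective, and Ringel et al.\ \cite{Ringel13} computed the latter probability to be $2/h$. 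Your route, even if completed, would amount to re-deriving that enumeration by hand; the missing idea is precisely the support/covering characterization that makes the bijection (and hence the $k$-independence) immediate.
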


At last, we give a summary of results on how Garside element action influences relative projectiviy/injectivity.

\begin{thm}[Theorem \ref{thm E}]
For any complete exceptional sequence $E_\ast=(E_1,\cdots,E_n)$, let
\[
	\Delta(E_1,\cdots,E_n)=(E_n',E_{n-1}',\cdots,E_1')
\]
where each $E_k=\tau_k E_k$. (Also, $E_1'=E_1$.) Then
\begin{enumerate}
\item $E_k$ is relatively projective in $E_\ast$ if and only if $E_k'$ is relatively injective in $\Delta E_\ast$.
\item $E_k$ is a projective module if and only if $E_k'$ is a root.
\item $E_k$ is a root if and only if $E_k'$ is an injective module.
\item $E_k$ is relatively injective but not relatively projective if and only if $E_k'$ is relatively projective but not relatively injective.
\end{enumerate}
\end{thm}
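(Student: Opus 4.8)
The plan is to read all four assertions off the support-theoretic characterisation of relative projectivity, relative injectivity, and of the projective/injective/root trichotomy from Section~\ref{ss: char of rel pro and rel inj}, by determining how that characterisation transforms under $\Delta$; the guiding principle is that $\Delta$, being assembled from the relative Auslander--Reiten translations $\tau_k$, exchanges ``projective-flavoured'' and ``injective-flavoured'' data. (One could instead go through the bijection with ordered clusters and study the induced operation, but the direct combinatorial route looks cleaner.) The first reduction is that (4) is a formal consequence of (1) and Theorem~\ref{A}: $\Delta E_\ast$ is again a complete exceptional sequence, so Theorem~\ref{A} tells us that in each of $E_\ast$ and $\Delta E_\ast$ every term is relatively projective or relatively injective (or both); negating both sides of the equivalence in (1) and combining with this dichotomy yields (4). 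So the content is in (1), (2) and (3).

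For (1), the characterisation of Section~\ref{ss: char of rel pro and rel inj} expresses ``$E_k$ relatively projective in $E_\ast$'' as a condition on the support of $E_k$ relative to the supports of the other terms inside the relevant wide subcategory, and dually for relative injectivity. Since $\tau_k$ is (up to the theorem's convention on relatively projective terms) the Auslander--Reiten translation of a hereditary abelian category, it interchanges (relatively) projective and (relatively) injective objects; the point is then to check that, once the order of $E_\ast$ is reversed, the support condition detecting ``$E_k$ relatively projective'' is carried exactly onto the support condition detecting ``$E_k'$ relatively injective'' in $\Delta E_\ast$. This rests on understanding how the chain of wide subcategories attached to $E_\ast$ and the translations $\tau_k$ are transported by $\Delta$ --- in particular on the compatibility of $\Delta$ with the filtration of $E_\ast$ by its tails $(E_j,\dots,E_n)$.

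Parts (2) and (3) are the ``endpoint'' refinements, handled by the same method near the projectives and injectives of the relevant wide subcategories. If $E_k$ is a projective module then $\tau_k E_k$, which here is governed by the stated convention since $E_k$ is in particular relatively projective, is --- by the local shape of $\tau_k$ at the projective end together with the characterisation of Section~\ref{ss: char of rel pro and rel inj} applied inside $\Delta E_\ast$ --- a root; conversely, running that characterisation backwards through $\Delta$ shows a term of $\Delta E_\ast$ is a root only if it is the $\Delta$-image of a projective module. This gives (2), and (3) is the mirror-image statement, which I would prove by the dual argument (the relative translation carrying roots to the injective end) rather than deduce formally from (2), since $\Delta$ is not an involution.

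The step I expect to be the main obstacle is precisely this bookkeeping: pinning down how the chain of wide subcategories attached to $E_\ast$, together with the relative translations $\tau_k$, is transported by the order reversal, and handling with care the relatively projective terms, for which $\tau_k E_k$ is defined only by the stated convention and which are exactly the terms producing roots. Once it is verified that each ``projective-type'' support condition for $E_k$ inside its wide subcategory becomes the matching ``injective-type'' condition for $E_k'$ inside the corresponding wide subcategory of $\Delta E_\ast$ (and likewise at the two extremes), statements (1)--(3) follow, and (4) follows from (1) and Theorem~\ref{A}.
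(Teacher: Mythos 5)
Your reduction of (4) to (1) together with Theorem \ref{thm: all terms are rel proj or rel inj} is exactly right and matches the paper. But for (1)--(3) what you have is a plan whose central step is the one you yourself flag as ``the main obstacle,'' and that step is the entire content of the theorem. The paper resolves it with one concrete identity that your write-up never states: for each $k$,
\[
\cA_k=(E_{k+1}\oplus\cdots\oplus E_n)^\perp \;=\; {}^\perp(E_n'\oplus\cdots\oplus E_{k+1}')=\cB_k'.
\]
Once this is in hand, (1) is immediate and purely categorical: $E_k$ is relatively projective in $E_\ast$ iff it is a projective object of $\cA_k$, iff $E_k'=\tau_kE_k$ is an injective object of the \emph{same} category $\cA_k=\cB_k'$, iff $E_k'$ is relatively injective in $\Delta E_\ast$. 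No support condition enters, and in fact your plan to read (1) off ``the support-theoretic characterisation'' cannot be carried out as stated: the paper's support criteria (Theorem \ref{thm: new characterization of rel proj}, Corollary \ref{cor: characterize rel proj-rel inj objects as shifts of projective objects}) characterize only the conjunction ``relatively projective \emph{and} relatively injective,'' or relative projectivity \emph{conditional on} relative injectivity; relative projectivity by itself is not expressed as a support condition anywhere in the paper.

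For (2) and (3) the missing ingredient is Corollary \ref{cor: characterize rel proj-rel inj objects as shifts of projective objects}, parts (3) and (4): $E_k$ is a root iff appending a suitable projective module (resp.\ prepending a suitable injective module) to the sequence with $E_k$ deleted yields an exceptional sequence. Statement (3) then falls out of the definition $\delta_kE_\ast=(E_k',E_1,\cdots,\widehat{E_k},\cdots,E_n)$: $E_k$ is a root in $E_\ast$ iff the prepended term $E_k'$ is an injective module. For (2) one interpolates the exceptional sequences $(E_n',\cdots,E_{k+1}',E_1,\cdots,E_k)$ and $(E_n',\cdots,E_{k+1}',E_k',E_1,\cdots,E_{k-1})$ (legitimate exactly because $\cA_k=\cB_k'$) and applies part (3) of that corollary, using that being a root depends only on supports and hence not on the ordering of the other terms. ``The local shape of $\tau_k$ at the projective end'' is not a substitute for this argument. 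In short: the skeleton and the treatment of (4) are correct, but the identity $\cA_k=\cB_k'$ and the appeal to Corollary \ref{cor: characterize rel proj-rel inj objects as shifts of projective objects}(3),(4) are genuine gaps, and the proposed support-based route to (1) would not go through.
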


Outline of the paper is as follows:
\begin{itemize}
    \item In Section 1, we review exceptional sequences and relative projectivity/injectivity.
    \item In Section 2, we prove that every term in an exceptional sequence is either relatively projective or relatively injective. 
    \item In Section 3, we give a characterization of an object being both relatively projective and relatively injective.
    \item In Section 4, we give an application to the probability of an object being both relatively projective and relatively injective.
    \item in Section 5, we prove the bijection between projectively signed exceptional sequences and positive clusters.
    \item In Section 6, we extend the bijection to m-exceptional sequences.
    \item In Section 7, we give a summary of results on how Garside element action influence relative projectivity/injectivity.
\end{itemize}

This paper is an updated version of the paper titled ``All terms in a complete exceptional sequence are relatively projective or relatively injective.''

\section{Exceptional sequences and relative projectivity/injectivity}

Recall that a module $E$ is called \emph{exceptional} is if it is rigid, i.e., $\Ext(E,E)=0$ and its endomorphism ring is a division algebra. In particular $E$ is indecomposable. An \emph{exceptional sequence} over a hereditary algebra $\Lambda$ is a sequence of exceptional modules $(E_1,\cdots,E_n)$ so that, for all $i<j$, $E_i\in E_j^\perp$ by which we mean
\[
	\Hom(E_j,E_i)=0=\Ext(E_j,E_i).
\]
The sequence is called \emph{complete} if it is maximal, i.e., of length $n$ where

Recall that, given any complete exceptional sequence $(E_1,\cdots,E_n)$, a term $E_k$ is called \emph{relatively projective} if it is a projective object in the perpendicular category
\[
    \cA_k:=(E_{k+1}\oplus \cdots\oplus E_n)^\perp
\]
and $E_k$ is called \emph{relatively injective} if it is an injective object of
\[
    \cB_k:=\,^\perp(E_1\oplus \cdots \oplus E_{k-1}).
\]
It is clear that $\cB_k=\,^\perp\cA_{k-1}$ and $\cA_k= \cB_{k+1}^\perp$. In particular, $\cA_k$ and $\cB_k$ depend only on $E_j$ for $j\ge k$.

We say that $E_k$ is \emph{covered} by a collection of modules if the support of $E_k$ is contained in the union of the supports of these modules.

\begin{rema}
    In this paper, we also consider exceptional sequences of length $<n$. The definition of relative projectivity and relative injectivity will be with respect to the exceptional sequence completed on the left. To emphasize this we number these shorter exceptional sequences:
    \[
        (E_k,E_{k+1},\cdots,E_n).
    \]
    The missing terms $E_1,\cdots,E_{k-1}$ should go on the left. Thus, $E_n$ is always relatively injective and it is relatively projective if and only if it is a projective module. The concepts of being relatively projective and relatively injective are independent of the choice of the missing terms $E_1,\cdots,E_{k-1}$.
\end{rema}

\section{Every term is either relatively projective or relatively injective}
In this section, we will prove that every term in an exceptional sequence is either relatively projective or relatively injective. In order to prove the desired statement, we first need an equivalent definition of relatively projective/injective objects.

\begin{lem}\label{lem: B is in C perp}
Let $A\to B\to C$ be an almost split sequence over a hereditary algebra so that $C$ is exceptional. Then $B\in C^\perp$, i.e.,
\[
	\Hom(C,B)=0=\Ext(C,B).
\]
\end{lem}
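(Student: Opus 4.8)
The plan is to exploit the structure of an almost split sequence $A \to B \to C$ together with the hereditary hypothesis. Since $C$ is exceptional, $\End(C)$ is a division ring and $\Ext(C,C)=0$; the latter already shows that the sequence does not ``contain'' $C$ as a summand of $B$, which is the first sign that $\Hom(C,B)$ and $\Ext(C,B)$ should vanish. First I would handle $\Hom(C,B)=0$. A nonzero map $f\colon C \to B$ composed with $B \to C$ must be non-invertible (else the sequence splits), hence nilpotent since $\End(C)$ is a division ring, hence zero because $C$ is indecomposable; so any $f\colon C\to B$ factors through the kernel inclusion $A \to B$, i.e., lifts to $g\colon C \to A$. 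Now I would push this lifting argument one step further: the point of an almost split sequence is that a map $C \to A$ followed by $A \to B$ cannot be a split mono unless... — more precisely I want to argue that any map $C \to A$ precomposed appropriately is killed, or iterate to get a contradiction with finite length. Concretely, if $f\colon C\to B$ is nonzero it lifts to $g\colon C\to A$ which is nonzero, but then $A\to B\to C$ being almost split means the map $C\to A\to B$ is not a section, and I can feed $g$ back through the minimality of the sequence; the clean way is to observe that $A$ and $C$ have disjoint ``levels'' under $\tau$ (i.e., $A=\tau C$), so $\Hom(C,\tau C)$ is controlled — indeed for $C$ exceptional over a hereditary algebra one has $\Hom(C,\tau C)=0$, which gives $g=0$ and hence $f=0$.

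For $\Ext(C,B)=0$, I would apply $\Hom(C,-)$ to the short exact sequence $0 \to A \to B \to C \to 0$ and read off the long exact sequence
\[
\Hom(C,C) \xrightarrow{\partial} \Ext(C,A) \to \Ext(C,B) \to \Ext(C,C) = 0,
\]
using that the algebra is hereditary so $\Ext^{\ge 2}$ vanishes and the sequence terminates. Thus $\Ext(C,B)=0$ follows once I show the connecting map $\partial\colon \Hom(C,C) \to \Ext(C,A)$ is surjective. But $\partial$ sends $\mathrm{id}_C$ to the class of the extension $0\to A \to B \to C \to 0$ itself, so it suffices to show $\Ext(C,A)$ is one-dimensional over $\End(C)$ (acting on the right) and generated by this almost split sequence — which is exactly the defining property of the almost split sequence ending in $C$: every non-split extension of $C$ by $A=\tau C$ is equivalent, up to the $\End(C)$-action, to the almost split one. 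Combined with $\Hom(C,B)=0$ and left-exactness, the long exact sequence then collapses to give $\Ext(C,B)=0$.

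The main obstacle is the vanishing $\Hom(C,A)=\Hom(C,\tau C)=0$ for $C$ exceptional over a hereditary algebra; everything else is formal manipulation of the long exact sequence and the universal property of the almost split sequence. I would prove this vanishing either by the level/slice argument above (over the path algebra of an acyclic quiver, an exceptional module and its $\tau$-translate cannot both be nonzero in a way that admits a nonzero hom, by a standard support/orientation argument), or, more robustly, via the Auslander–Reiten formula: $\Hom(C,\tau C) \cong D\overline{\Ext}(C,C)$, and since $\Ext(C,C)=0$ for an exceptional module, the right-hand side vanishes, giving $\Hom(C,\tau C)=0$ directly. I would use the Auslander–Reiten formula version as it is the cleanest and needs only the exceptionality hypothesis already in the statement.
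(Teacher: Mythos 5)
Your proposal is correct and follows essentially the same route as the paper: apply $\Hom(C,-)$ to the almost split sequence, use $\Hom(C,\tau C)=0$ (from $\Ext(C,C)=0$ via the Auslander--Reiten formula over a hereditary algebra) together with $\Ext(C,C)=0$, and observe that the connecting map $\Hom(C,C)\to\Ext(C,A)$ is an isomorphism of one-dimensional $\End(C)$-vector spaces. Your hands-on splitting argument for $\Hom(C,B)=0$ is just a verbose form of reading the same vanishing off the six-term exact sequence, so there is no substantive difference.
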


\begin{proof}
This follows from the 6-term exact sequence
\[
	\Hom(C, A)\to \Hom(C, B)\to \Hom(C, C)\to \Ext(C, A)\to \Ext(C, B)\to \Ext(C, C)
\]
Since $C$ is rigid and $A=\tau C$, $\Hom(C,\tau C)=0=\Ext(C,C)$. Also, $\Hom(C, C)\to \Ext(C, A)$ is an isomorphism since it is a nonzero map between one-dimensional vector spaces over the division algebra $\End(C)$. Therefore, the remaining two terms are zero: $\Hom(C, B)=0=\Ext(C, B)$.
\end{proof}

For any complete exceptional sequence $(E_1,\cdots,E_k,\cdots,E_n)$ for a hereditary algebra $\Lambda$, we use the notation
\[
	\cA_k=(E_{k+1}\oplus \cdots \oplus E_n)^\perp
\]
to denote the wide subcategory of all $\Lambda$-modules $X$ so that
\[
	\Hom(E_j,X)=0=\Ext(E_j,X)
\]
for all $j>k$. A \emph{wide subcategory} of $mod\text-\Lambda$ is an exactly embedded abelian subcategory which is closed under extensions \cite{IT}. An object $E_k$ in the exceptional sequence is called \emph{relatively projective} if $E_k$ is a projective object of the wide subcategory $\cA_k$.

\begin{lem}\label{lem: when is Ek relatively projective} $E_k$ is relatively projective if and only if there is no epimorphism $X\onto E_k$ where $X$ is an object of $\cA_{k-1}$.
\end{lem}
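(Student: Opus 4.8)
The plan is to reformulate everything inside the wide subcategory $\cA_k$ and then invoke almost split sequences. Writing $E_k^\perp=\{X\in mod\text-\Lambda : \Hom(E_k,X)=0=\Ext(E_k,X)\}$, one has
\[
	\cA_{k-1}=(E_k\oplus E_{k+1}\oplus\cdots\oplus E_n)^\perp=E_k^\perp\cap\cA_k ,
\]
so $\cA_{k-1}\subseteq\cA_k$. I will also use that $\cA_k$, being the perpendicular category $(E_{k+1}\oplus\cdots\oplus E_n)^\perp$, is equivalent to $mod\text-\Gamma$ for a finite dimensional hereditary algebra $\Gamma$ (see \cite{IT}), and that the inclusion $\cA_k\hookrightarrow mod\text-\Lambda$ is an exactly embedded abelian subcategory closed under extensions, so that $\Hom$, $\Ext$, kernels and cokernels of objects of $\cA_k$ agree whether computed in $\cA_k$ or in $mod\text-\Lambda$. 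In particular $E_k$ is an exceptional object of $\cA_k$, and by definition $E_k$ is relatively projective exactly when every short exact sequence in $\cA_k$ with right-hand term $E_k$ splits.

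For the implication ``$E_k$ relatively projective $\Rightarrow$ no such epimorphism'' I would argue directly. Suppose $E_k$ is relatively projective and $p\colon X\onto E_k$ is an epimorphism with $X\in\cA_{k-1}$. Since $X\in\cA_k$, the short exact sequence $0\to\ker p\to X\to E_k\to 0$ lies in $\cA_k$ and hence splits; so $E_k$ is a direct summand of $X$ and $\Hom(E_k,X)\ne 0$. But $X\in\cA_{k-1}\subseteq E_k^\perp$ forces $\Hom(E_k,X)=0$, a contradiction. So no epimorphism onto $E_k$ from an object of $\cA_{k-1}$ exists.

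For the converse I would prove the contrapositive using Lemma \ref{lem: B is in C perp}. If $E_k$ is not relatively projective, then $E_k$ is a non-projective indecomposable object of $\cA_k$, so there is an almost split sequence $0\to A\to B\to E_k\to 0$ in $\cA_k$ with $A=\tau_{\cA_k}E_k$. Applying Lemma \ref{lem: B is in C perp} inside the hereditary category $\cA_k$, with $C=E_k$ (which is exceptional there), gives $B\in E_k^\perp$; combined with $B\in\cA_k$ this yields $B\in E_k^\perp\cap\cA_k=\cA_{k-1}$. Since $E_k\ne 0$ we have $B\ne 0$, so $B\onto E_k$ is the desired epimorphism from an object of $\cA_{k-1}$ onto $E_k$.

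The one step that is more than formal bookkeeping, and the point I expect to need the most care, is the structural input that the perpendicular category $\cA_k$ is itself equivalent to the module category of a finite dimensional hereditary algebra: this is what makes almost split sequences available in $\cA_k$ and what allows Lemma \ref{lem: B is in C perp} to be applied there. This is classical, and once it is granted the rest of the argument is routine.
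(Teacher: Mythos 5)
Your proof is correct and follows essentially the same route as the paper's: the forward direction via splitting of the epimorphism inside $\cA_k$ (the paper derives the contradiction from $E_k\notin\cA_{k-1}$ rather than from $\Hom(E_k,X)=0$, but this is the same point), and the converse via the almost split sequence ending in $E_k$ in $\cA_k$ together with Lemma \ref{lem: B is in C perp} and the identity $\cA_k\cap E_k^\perp=\cA_{k-1}$. Your explicit remark that $\cA_k$ is module-equivalent to a hereditary algebra, which justifies both the existence of almost split sequences and the applicability of Lemma \ref{lem: B is in C perp} there, is the right structural input and is left implicit in the paper.
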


\begin{proof} ($\then$) If $E_k$ is projective in $\cA_k$, then any such epimorphism would split since $\cA_{k-1}\subset \cA_k$. But $E_k\notin \cA_{k-1}$, so no such $X$ exists.

($\Leftarrow$) If $E_k$ is not projective in $\cA_k$ then there is an almost split sequence
\[
	A\to X\to E_k
\]
in $\cA_k$ where $A\neq0$. Lemma \ref{lem: B is in C perp} implies that $X\in E_k^\perp$. Thus, $X\in \cA_k\cap E_k^\perp=\cA_{k-1}$.
\end{proof}

An object $E_k$ in the complete exceptional sequence $(E_1,\cdots,E_n)$ is \emph{relatively injective} if it is an injective object of the left perpendicular category
\[
	\cB_k=\,^\perp(E_1\oplus\cdots\oplus E_{k-1})
\]
It is well-known that $\cB_k$ is dual to $\cA_{k-1}$ in the sense that $\cA_{k-1}=\cB_k^\perp$ and $\cB_k=\,^\perp \cA_{k-1}$. Thus $\cB_{k}$ is the wide subcategory of $mod\text-\Lambda$ generated by $E_{k},\cdots,E_n$.

\begin{lem}\label{lem: when is Ek relatively injective} 
$E_k$ is relatively injective if and only if there is no monomorphism $E_k\cof Y$ where $Y\in \cB_{k+1}$.
\end{lem}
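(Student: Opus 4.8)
The plan is to dualize the argument for Lemma~\ref{lem: when is Ek relatively projective}. The essential new ingredient is the dual of Lemma~\ref{lem: B is in C perp}: if $A\to B\to C$ is an almost split sequence over a hereditary algebra with $A$ exceptional, then $B\in{}^\perp A$, i.e. $\Hom(B,A)=0=\Ext(B,A)$. I would prove this by applying $\Hom(-,A)$ to the sequence, giving the six-term exact sequence
\[
\Hom(C,A)\to\Hom(B,A)\to\Hom(A,A)\to\Ext(C,A)\to\Ext(B,A)\to\Ext(A,A).
\]
Here $\Ext(A,A)=0$ since $A$ is rigid, and $C=\tau^{-1}A$ gives $\Ext(C,A)\cong D\Hom(A,\tau C)=D\Hom(A,A)$ by the Auslander--Reiten formula over a hereditary algebra. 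The connecting map $\Hom(A,A)\to\Ext(C,A)$ is nonzero (it carries $\mathrm{id}_A$ to the class of the almost split sequence), hence an isomorphism between one-dimensional spaces over the division ring $\End(A)$; the two outer terms therefore vanish.

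Given this, the proof of the lemma mirrors that of Lemma~\ref{lem: when is Ek relatively projective}. For ($\then$): if $E_k$ is injective in $\cB_k$, then since $E_k\in\cB_k$, any monomorphism $E_k\cof Y$ with $Y\in\cB_{k+1}\subseteq\cB_k$ splits, making $E_k$ a summand of $Y$; but $\Hom(E_k,E_k)\neq0$ shows $E_k\notin{}^\perp E_k\supseteq\cB_{k+1}$, so no such $Y$ exists. For ($\Leftarrow$): if $E_k$ is not injective in $\cB_k$, then, using that $\cB_k$ is a wide subcategory generated by the exceptional objects $E_k,\dots,E_n$ and hence equivalent to the module category of a hereditary algebra (so almost split sequences exist and $E_k$, being indecomposable and non-injective there, is the start of one), there is an almost split sequence $E_k\to Y\to C$ in $\cB_k$ with $C\neq0$. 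Since $\cB_k$ is exactly embedded and extension closed, $\Hom$ and $\Ext$ computed in $\cB_k$ agree with those in $mod\text-\Lambda$, so the dual of Lemma~\ref{lem: B is in C perp} applied with $A=E_k$ gives $Y\in{}^\perp E_k$. Hence $Y\in\cB_k\cap{}^\perp E_k={}^\perp(E_1\oplus\cdots\oplus E_k)=\cB_{k+1}$, and $E_k\cof Y$ is the required monomorphism.

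Alternatively, one can deduce the statement formally from Lemma~\ref{lem: when is Ek relatively projective} via the standard duality $D=\Hom_\kk(-,\kk)\colon mod\text-\Lambda\to mod\text-\Lambda\op$, which sends the complete exceptional sequence $(E_1,\dots,E_n)$ to $(DE_n,\dots,DE_1)$, carries $\cB_k$ to the category $(DE_1\oplus\cdots\oplus DE_{k-1})^\perp$ playing the role of $\cA_{n+1-k}$ for that sequence over $\Lambda\op$, interchanges relatively injective and relatively projective objects, and turns monomorphisms into $\cB_{k+1}$ into epimorphisms out of the corresponding $\cA$-category.

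The only real obstacle is bookkeeping: one must confirm that almost split sequences in the wide subcategory $\cB_k$ behave exactly as in an honest hereditary module category — in particular that $\tau$ relative to $\cB_k$ is the functor for which the Auslander--Reiten formula above applies — and, in the duality approach, that the relabelling of indices under $D$ matches $\cB_k$ with the correct perpendicular category over $\Lambda\op$. Everything else is identical, mutatis mutandis, to the proof of Lemma~\ref{lem: when is Ek relatively projective}.
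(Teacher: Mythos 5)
Your proposal follows the paper's own route: the paper's proof of this lemma is literally ``similar to the proof of Lemma~\ref{lem: when is Ek relatively projective},'' i.e.\ dualize, and that is what you do (the alternative via the standard duality $D$ is also fine, and is the cleaner way to avoid redoing the homological algebra). Both directions of your argument are correct: the splitting argument for ($\then$) and the almost split sequence $E_k\to Y\to C$ in the wide subcategory $\cB_k$ for ($\Leftarrow$), with $Y\in\cB_k\cap{}^\perp E_k=\cB_{k+1}$.

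One step in your dual of Lemma~\ref{lem: B is in C perp} is under-justified. In the six-term sequence
\[
\Hom(C,A)\to\Hom(B,A)\to\Hom(A,A)\to\Ext(C,A)\to\Ext(B,A)\to\Ext(A,A),
\]
knowing that the connecting map is an isomorphism and $\Ext(A,A)=0$ gives $\Ext(B,A)=0$, but for the other term it only gives $\Hom(B,A)\cong\Hom(C,A)$. You still need $\Hom(C,A)=\Hom(\tau^{-1}A,A)=0$, which is the exact dual of the paper's explicit remark that $\Hom(C,\tau C)=0$; it follows from rigidity of $A$ via the Auslander--Reiten formula (using that $\tau^{-1}A$ is indecomposable non-projective, so no nonzero map from it factors through a projective). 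This is a one-line fix, not a flaw in the approach.
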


\begin{proof}
Similar to the proof of Lemma \ref{lem: when is Ek relatively projective}.
\end{proof}

We can now prove the title of this section.

\begin{thm}{A}\label{thm: all terms are rel proj or rel inj}
    Let $(E_1,\cdots,E_k,\cdots, E_n)$ be a complete exceptional sequence over $\Lambda$. Then every object $E_k$ is either relatively projective or relatively injective (or both).
\end{thm}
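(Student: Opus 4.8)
The plan is to show that if $E_k$ is not relatively projective, then it must be relatively injective. By Lemma \ref{lem: when is Ek relatively projective}, failure of relative projectivity means there is an epimorphism $p\colon X\onto E_k$ with $X\in\cA_{k-1}$; equivalently (taking the almost split sequence in $\cA_k$ ending at $E_k$) there is an almost split sequence $A\to X\to E_k$ in $\cA_k$ with $A=\tau_{\cA_k}E_k\neq 0$, and $X\in\cA_{k-1}=\cA_k\cap E_k^\perp$. I want to conclude that $E_k$ is injective in $\cB_k$, i.e., by Lemma \ref{lem: when is Ek relatively injective}, that no monomorphism $E_k\cof Y$ exists with $Y\in\cB_{k+1}$.

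The key observation I would exploit is the duality between $\cA_{k-1}$ and $\cB_k$: $\cB_k={}^\perp\cA_{k-1}$ and $\cA_{k-1}=\cB_k^\perp$. So I would argue by contradiction: suppose $E_k$ is neither relatively projective nor relatively injective. Then simultaneously I get an almost split sequence $A\to X\to E_k$ in $\cA_k$ with $X\in\cA_{k-1}$ and $A\neq 0$, and dually an almost split sequence $E_k\to Y\to C$ in $\cB_k$ with $Y\in\cB_{k+1}$ and $C\neq 0$. Note $E_k$ lives in $\cA_k\cap\cB_k$ and is the projective-cover-free / injective-envelope-free object being tested. The plan is to derive a contradiction by tracking the almost split sequence of $E_k$ \emph{within the common wide subcategory} $\cW:=\cA_k\cap\cB_k$ (which contains $E_k$), or more precisely within $\cA_k$ and $\cB_k$ separately, and relating the two. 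Since $A\to X\to E_k$ is almost split in $\cA_k$ and $E_k\to Y\to C$ is almost split in $\cB_k$, one should be able to compare the middle terms: applying $\Hom(E_k,-)$ and $\Hom(-,E_k)$ and using that $\cB_k$ is the wide subcategory generated by $E_k,\dots,E_n$ while $\cA_{k-1}$ is its right perpendicular, the fact that $X\in E_k^\perp$ forces constraints on how $E_k$ embeds.

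More concretely, the cleanest route is probably a dimension/homological count: in a hereditary wide subcategory $\cC$ containing an exceptional object $Z$ which is neither projective nor injective in $\cC$, the almost split sequence $\tau_\cC Z\to M\to Z\to$ and $Z\to N\to\tau_\cC^{-1}Z$ both exist, and the connecting data ties $\Ext_\cC(Z,\tau_\cC Z)\cong D\End_\cC(Z)$. I would take $\cC=\cB_k$: here $E_k$ is, by assumption, not injective, so $\tau_{\cB_k}^{-1}E_k=C\neq 0$ and we have the almost split sequence $E_k\to Y\to C$ in $\cB_k$ with $Y\in\cB_{k+1}=E_k$-right-perpendicular-part, hence $Y\in\cB_k\cap E_k^\perp$. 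Analogously with $\cC=\cA_k$ and the assumption $E_k$ not projective we get $A=\tau_{\cA_k}E_k\neq 0$ and $X\in\cA_k\cap E_k^\perp=\cA_{k-1}$. The contradiction should come from noticing that $\cA_{k-1}$ and $\cB_{k+1}$ together with $E_k$ cannot both support the Auslander--Reiten theory of $E_k$ on both sides — essentially because $E_k$ is projective in $\cB_k$ \emph{or} injective in $\cA_k$ automatically: indeed $E_k$ is the unique (up to the known perpendicular decomposition) object of $\cA_k$ not in $\cA_{k-1}$, and of $\cB_k$ not in $\cB_{k+1}$, so its AR-translate in $\cA_k$, namely $A$, lies in $\cA_{k-1}\subseteq\cB_k^\perp$, while its AR-translate in $\cB_k$, namely $C$, lies in $\cB_{k+1}\subseteq{}^\perp\cA_k$. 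Then $\Hom(C, A)=0$ and the non-split short exact sequences $0\to A\to X\to E_k\to 0$ and $0\to E_k\to Y\to C\to 0$ can be spliced to a short exact sequence $0\to A\to Z\to C\to 0$ (pushout/pullback along $E_k$), which must be non-split because the original two are, contradicting $\Ext(C,A)$-vanishing that follows from $A\in\cA_{k-1}=\cB_k^\perp$ and $C\in\cB_k$.

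I expect the main obstacle to be making the splicing step rigorous: one must check that the pullback of $E_k\onto$ (no — rather the Yoneda splice of the two extensions) indeed yields a genuine two-step filtration $0\to A\to Z\to C\to 0$ that is non-split, and that the $\Ext$-vanishing $\Ext(C,A)=0$ really applies — this uses $A\in\cA_{k-1}$, $C\in\cB_k$, and the perpendicularity $\cA_{k-1}=\cB_k^\perp$ together with the hereditary hypothesis (so that $\Ext$ in the wide subcategory agrees with $\Ext$ in $mod\text-\Lambda$). A secondary technical point is confirming that $X\in\cA_{k-1}$ and $Y\in\cB_{k+1}$, which is exactly Lemma \ref{lem: B is in C perp} applied inside $\cA_k$ and its dual applied inside $\cB_k$; one must be slightly careful that $A=\tau_{\cA_k}E_k$ rather than $\tau_\Lambda E_k$, but Lemma \ref{lem: B is in C perp} is stated for almost split sequences in the relevant hereditary category, so this goes through.
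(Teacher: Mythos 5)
Your setup agrees with the paper's: assume $E_k$ is neither relatively projective nor relatively injective, extract the two almost split sequences $A\to X\to E_k$ in $\cA_k$ and $E_k\to Y\to C$ in $\cB_k$, and note via Lemma \ref{lem: B is in C perp} (and its dual) that the \emph{middle} terms satisfy $X\in\cA_{k-1}=\cB_k^\perp$ and $Y\in\cB_{k+1}\subset\cB_k$. But your concluding step has two genuine problems. First, the perpendicularity you invoke is attached to the wrong terms: you claim $A=\tau_{\cA_k}E_k$ lies in $\cA_{k-1}$ and $C=\tau_{\cA_k}^{-1}E_k$ lies in $\cB_{k+1}$, but $\Ext(E_k,A)\cong D\End(E_k)\neq 0$ shows $A\notin E_k^\perp$, hence $A\notin\cA_{k-1}=\cA_k\cap E_k^\perp$ (and dually $C\notin\cB_{k+1}$). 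So the vanishing $\Ext(C,A)=0$ you want to contradict is not established. Second, even granting some vanishing, the splice of $0\to A\to X\to E_k\to 0$ with $0\to E_k\to Y\to C\to 0$ does not produce a short exact sequence $0\to A\to Z\to C\to 0$: composing $X\to E_k\to Y$ yields a four-term exact sequence representing a class in $\Ext^2(C,A)$, which vanishes automatically over a hereditary algebra. There is no pushout or pullback here that converts the two extensions into a single extension of $C$ by $A$, and no contradiction arises from $\Ext^2=0$.

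The paper's proof avoids both issues by contradicting the vanishing between the \emph{middle} terms, $\Ext(Y,X)=0$, which genuinely follows from $X\in\cB_k^\perp$ and $Y\in\cB_k$. Concretely: since $\Lambda$ is hereditary, $\Ext(C,-)$ is right exact, so the epimorphism $X\onto E_k$ induces a surjection $\Ext(C,X)\onto\Ext(C,E_k)$; lift the class of $E_k\to Y\to C$ to an extension $X\to Z\to C$ whose pushout along $X\onto E_k$ is $E_k\to Y\to C$. The pushout square yields a short exact sequence
\[
0\to X\to E_k\oplus Z\to Y\to 0,
\]
which cannot split because $E_k$ is not a summand of $X\oplus Y$ (as $X\in E_k^\perp$ and $Y\in{}^\perp E_k$), so $\Ext(Y,X)\neq 0$, the desired contradiction. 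If you want to salvage your argument, replace the end terms $A,C$ by the middle terms $X,Y$ and replace the Yoneda splice by this lifting-and-pushout construction.
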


\begin{proof}

Suppose that $E_k$ is neither relatively projective nor relatively injective. Then, by Lemmas \ref{lem: when is Ek relatively projective} and \ref{lem: when is Ek relatively injective}, we have short exact sequences:
\[
	A\to X\to E_k
\]
\[
	E_k\to Y\to B
\]
where $X\in \cA_{k-1}=\cB_k^\perp$ and $Y\in\cB_{k+1}\subset\cB_k$. This implies in particular that $\Ext(Y,X)=0$. However, the following diagram will show that this is not true.
\[
\xymatrix{
A
\ar[d]\\
X\ar[d]\ar[r] &
	Z\ar[d]\ar[r] & B\ar[d]^=\\
E_k \ar[r]& 
	Y \ar[r]&
	B
	}
\]
Since $\Ext$ is right exact, we have an epimorphism
\[
	\Ext(B,X)\onto \Ext(B,E_k)
\]
So, there is a short exact sequence $X\to Z\to B$ so that $E_k\to Y\to B$ is the pushout of this sequence along $X\onto E_k$. This gives a short exact sequence:
\[
	X\to E_k\oplus Z\to Y.
\]
But this sequence does not split since $E_k$ is not a direct summand of $X\oplus Y$ since $X\in E_k^\perp$ and $Y\in\,^\perp E_k$. So, $\Ext(Y,X)\neq0$. This contradiction proves the theorem.
\end{proof}

\section{Characterization of being both relatively projective and relatively injective}\label{ss: char of rel pro and rel inj}

In this section, we will further characterize the relatively projective/injective objects by giving a criteria for an object to be both relatively projective and relatively injective.

First, we need a lemma that gives another equivalent definition of being relatively projective.

\begin{lem}\label{lem0: Ek rel proj if doesn't ext Ek-1perp}
$E_k$ is relatively projective if and only if 
\[
	\Ext(E_k,X)=0
\]
for all $X\in \cA_k\cap E_{k-1}^\perp$.
\end{lem}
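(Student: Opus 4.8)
The plan is to prove both implications by relating the condition $\Ext(E_k,X)=0$ for all $X\in\cA_k\cap E_{k-1}^\perp$ to the characterization of relative projectivity in Lemma \ref{lem: when is Ek relatively projective}, namely the nonexistence of an epimorphism $X'\onto E_k$ with $X'\in\cA_{k-1}=\cA_k\cap E_k^\perp$. The key observation to set up first is that $\cA_k\cap E_{k-1}^\perp$ and $\cA_{k-1}=\cA_k\cap E_k^\perp$ are both wide subcategories of $\cA_k$, and that inside $\cA_k$ the object $E_k$ is exceptional; moreover $\cA_{k-1}$ is precisely the perpendicular category of $E_k$ taken inside $\cA_k$. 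So everything can be carried out relative to the hereditary abelian category $\cA_k$, and I would reduce to the case $k=1$, $\cA_1=mod\text-\Lambda$ in the write-up, treating $E_k$ as a simple-like exceptional object and $E_{k-1}$ as a module we are perpendicular to.

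For the forward direction ($\then$): suppose $E_k$ is relatively projective, i.e., projective in $\cA_k$. Then for any $X$ in $\cA_k$ whatsoever we have $\Ext_{\cA_k}(E_k,X)=0$, and since $\cA_k$ is an exactly embedded extension-closed abelian subcategory, $\Ext_{\cA_k}(E_k,X)=\Ext_\Lambda(E_k,X)$; in particular this vanishes for every $X\in\cA_k\cap E_{k-1}^\perp$. This direction is essentially immediate.

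For the converse ($\Leftarrow$): assume $\Ext(E_k,X)=0$ for all $X\in\cA_k\cap E_{k-1}^\perp$, and suppose for contradiction that $E_k$ is not relatively projective. By Lemma \ref{lem: when is Ek relatively projective} there is an epimorphism $p\colon X'\onto E_k$ with $X'\in\cA_{k-1}=\cA_k\cap E_k^\perp$; we may take the one coming from the almost split sequence $A\to X'\to E_k$ in $\cA_k$, so that $A=\tau_{\cA_k}E_k\neq 0$ and $\Ext(E_k,X')\neq 0$ (the class of this sequence is nonzero). Now $X'$ lies in $E_k^\perp$ but we need an object in $E_{k-1}^\perp$; I would build it by taking the universal extension (or the trace/reject construction) to kill $\Hom(E_{k-1},X')$ and $\Ext(E_{k-1},X')$, i.e., replace $X'$ by its "$E_{k-1}$-perpendicularization'' $\widetilde X\in\cA_k\cap E_{k-1}^\perp$ together with a map; then chase the long exact $\Ext$ sequences to show the nonzero class in $\Ext(E_k,X')$ survives, either in $\Ext(E_k,\widetilde X)$ or forces a contradiction via $\Ext(E_k,E_{k-1})=0$ (which holds because $E_{k-1}\in E_k^\perp$ in the original exceptional sequence, using $k-1<k$). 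This yields $\Ext(E_k,\widetilde X)\neq 0$ with $\widetilde X\in\cA_k\cap E_{k-1}^\perp$, contradicting the hypothesis.

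The main obstacle is the converse direction, specifically controlling the perpendicularization step: one must ensure that perpendicularizing $X'$ against $E_{k-1}$ stays inside $\cA_k$ (this is automatic since $\cA_k$ is wide and contains $E_{k-1}$-free objects) and, more delicately, that the nonzero extension class is not destroyed. The cleanest route is probably to use that inside $\cA_k$ the pair $(E_{k-1},E_k)$ is an exceptional sequence of length two, so $\cA_k\cap E_{k-1}^\perp = \,^{\perp}(\text{nothing})$... rather, to invoke that $\cA_k\cap E_k^\perp$ and $\cA_k\cap E_{k-1}^\perp$ together with the homological tilting-type formula give $\Hom$/$\Ext$ exact sequences linking $\Ext(E_k,-)$ on $\cA_{k-1}$ and on $\cA_k\cap E_{k-1}^\perp$; then the argument is a diagram chase analogous to the proof of Theorem \ref{thm: all terms are rel proj or rel inj}. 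I expect a short explicit pushout/pullback diagram, exactly in the style already used in this section, to finish it.
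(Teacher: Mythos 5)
The forward direction is fine and matches the paper. The converse direction, however, contains a concrete error that breaks the argument. You take the almost split sequence $A\to X'\to E_k$ in $\cA_k$ and assert that $\Ext(E_k,X')\neq 0$ because ``the class of this sequence is nonzero.'' The class of that sequence is a nonzero element of $\Ext(E_k,A)$, not of $\Ext(E_k,X')$; and in fact $\Ext(E_k,X')=0$, since by Lemma \ref{lem: B is in C perp} the middle term of an almost split sequence ending in an exceptional object lies in $E_k^\perp$ (this is exactly how the paper proves $X'\in\cA_{k-1}$ in Lemma \ref{lem: when is Ek relatively projective}). So the nonvanishing class you propose to carry through the ``perpendicularization'' does not exist, and the rest of your converse direction has nothing to transport.

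The correct witness is the left-hand term $A=\tau_{\cA_k}E_k$, for which $\Ext(E_k,A)\neq 0$. The point you are missing is that no perpendicularization is needed: $A$ already lies in $E_j^\perp$ for every $j\neq k$, because $(A,E_1,\cdots,E_{k-1},E_{k+1},\cdots,E_n)$ is the exceptional sequence obtained by the mutation $\delta_k$ that deletes $E_k$ and inserts $\tau_k E_k$ at the front (Equation \eqref{eq: delta k of East}). Hence $A\in\cA_k\cap E_{k-1}^\perp$ and $\Ext(E_k,A)\neq 0$, contradicting the hypothesis. Your trace/reject construction is not only unnecessary but also unsubstantiated as written --- you never verify that the modified object stays in $\cA_k\cap E_{k-1}^\perp$ \emph{and} retains a nonzero $\Ext$ from $E_k$, and with the wrong starting object it cannot.
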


\begin{proof}
If $E_k$ is relatively projective, then $\Ext(E_k,X)=0$ for all $X\in \cA_k$. So, the conditions holds. Conversely, suppose that $E_k$ is not relatively projective. Then, we have an almost split sequence $X\to Y\to E_k$ in $\cA_k$ where $X$ is the $AR$-translate of $E_k$ in $\cA_k$. Then $X$ is the first terms of the exceptional sequence
\[
	(X,E_1,\cdots,E_{k-1},E_{k+1},\cdots,E_n).
\]
Therefore $X\in \cA_k\cap E_{k-1}^\perp$ and $\Ext(E_k,X)\neq0$.
\end{proof}

Now we can prove a result that associates being relatively projective to support of objects in the exceptional sequences.

\begin{thm}\label{thm: new characterization of rel proj}
Suppose that $E_k$ is relatively injective. Then $E_k$ is also relatively projective if and only if it is not covered by $E_j$ where $j<k$.
\end{thm}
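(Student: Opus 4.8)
The plan is to exploit the characterization of relative projectivity from Lemma~\ref{lem0: Ek rel proj if doesn't ext Ek-1perp}: under the standing hypothesis that $E_k$ is relatively injective, I want to show that $\Ext(E_k,X)=0$ for every $X\in\cA_k\cap E_{k-1}^\perp$ if and only if $\supp E_k$ is not contained in $\bigcup_{j<k}\supp E_j$. The ``support'' condition is most naturally translated into the language of dimension vectors: $E_k$ is covered by $E_1,\dots,E_{k-1}$ precisely when every simple in the top/socle support of $E_k$ appears in some $E_j$ with $j<k$, equivalently when the wide subcategory $\cA_{k-1}$ (generated, in the dual picture, by complementing) does \emph{not} omit any of the simples supporting $E_k$. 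I would first set up this dictionary carefully, using that $\cA_{k-1}=\cB_k^\perp$ and that $\cA_k\cap E_{k-1}^\perp=\cA_{k-1}$ together with the exceptional sequence extension $(X,E_1,\dots,E_{k-1},E_{k+1},\dots,E_n)$ appearing in the proof of Lemma~\ref{lem0: Ek rel proj if doesn't ext Ek-1perp}.

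For the direction ``not relatively projective $\Rightarrow$ covered'': if $E_k$ is not relatively projective, the AR-translate $X$ of $E_k$ inside $\cA_k$ is nonzero and sits in an almost split sequence $X\to Y\to E_k$ in $\cA_k$, with $X\in\cA_k\cap E_{k-1}^\perp=\cA_{k-1}$. I would argue that $\supp E_k\subseteq\supp X\cup\supp Y$ from the exact sequence, and then that $\supp Y$ is itself controlled: because $E_k$ is relatively injective, I want to use Lemma~\ref{lem: when is Ek relatively injective} (no monomorphism $E_k\cof Y'$ with $Y'\in\cB_{k+1}$) to force the middle term $Y$ of the AR-sequence — which lies in $\cA_k$ — to actually contribute nothing new to the support beyond what is already in $\cA_{k-1}$; the cleanest route is probably to observe that $\undim E_k = \undim X + \undim Y - \undim(\text{extra})$ forces $\supp E_k\subseteq\supp\cA_{k-1}$, and then that the objects of $\cA_{k-1}$ are exactly those supported on simples occurring among $E_1,\dots,E_{k-1}$ within the ambient wide subcategory $\cA_k$. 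Conversely, for ``covered $\Rightarrow$ not relatively projective'': if $\supp E_k\subseteq\bigcup_{j<k}\supp E_j$, I would produce a nonzero $X\in\cA_{k-1}$ with $\Ext(E_k,X)\neq0$ directly, by taking a suitable (relative) injective envelope or a minimal object of $\cA_{k-1}$ sharing a simple with the socle of $E_k$ and invoking hereditariness to get the nonvanishing $\Ext$ — here the relative injectivity of $E_k$ is again used to guarantee such an $X$ lies in $E_{k-1}^\perp$ and not merely in $\cA_k$.

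The main obstacle I anticipate is making the support bookkeeping rigorous, in particular establishing that for $X$ in a wide subcategory $\cW=\cA_k$, the condition $X\in E_{k-1}^\perp$ is genuinely equivalent to a statement about which simple objects of $\cW$ occur in a composition series of $X$, and then relating the simple objects of $\cW$ back to honest simple $\Lambda$-modules so that ``$\supp$'' has its literal meaning. One subtlety is that the simple objects of the wide subcategory $\cA_k$ need not be simple $\Lambda$-modules, so ``covered'' (a statement about genuine supports of $\Lambda$-modules) must be reconciled with the internal structure of $\cA_k$; I expect this is handled by noting that passing to a perpendicular category preserves the relevant support information because $\Lambda$ is hereditary and the perpendicular category of an exceptional object is again the module category of a smaller hereditary algebra. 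Once that reconciliation is in place, both implications should follow from Lemma~\ref{lem0: Ek rel proj if doesn't ext Ek-1perp} together with the observation that, inside the hereditary category $\cA_k$, vanishing of $\Ext(E_k,-)$ on all of $E_{k-1}^\perp\cap\cA_k$ is equivalent to $E_k$ being projective there, which in turn is a support condition relative to the distinguished object $E_{k-1}$.
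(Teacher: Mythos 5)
Your forward direction (``not relatively projective $\Rightarrow$ covered'') is essentially right but over-engineered. Once $E_k$ fails to be projective in $\cA_k$, the middle term $Y$ of the almost split sequence $X\to Y\to E_k$ in $\cA_k$ surjects onto $E_k$ and, by Lemma \ref{lem: B is in C perp}, lies in $E_k^\perp\cap\cA_k=\cA_{k-1}$; since $\undim Y$ lies in the span of $\undim E_1,\dots,\undim E_{k-1}$, every vertex in $\supp Y$ already occurs in some $\supp E_j$ with $j<k$, and $\supp E_k\subseteq\supp Y$ because $Y\onto E_k$. No socle bookkeeping and no use of relative injectivity is needed here; this is exactly how the paper argues via Lemma \ref{lem: when is Ek relatively projective}.

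The genuine gap is in the converse (``covered $\Rightarrow$ not relatively projective''), which is the hard half and carries all the content of the theorem. Your plan is to convert the covering hypothesis into a nonzero class in $\Ext(E_k,X)$ for some $X\in\cA_{k-1}$ ``sharing a simple with the socle of $E_k$.'' This cannot work as stated, for two reasons. First, membership in the wide subcategory $\cA_{k-1}=(E_k\oplus\cdots\oplus E_n)^\perp$ is not a support condition: while every object of $\cA_{k-1}$ is supported on $\bigcup_{j<k}\supp E_j$, the converse inclusion fails badly, so a module chosen to share a socle constituent with $E_k$ will generally not lie in $\cA_{k-1}$; the ``dictionary'' you propose is false in the direction you need. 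Second, even granting a suitable $X$, overlapping support does not produce nonvanishing $\Ext$: by Auslander--Reiten duality $\Ext(E_k,X)\neq 0$ forces $\Hom(X,\tau E_k)\neq 0$, which is far stronger than a common vertex in the supports. Moreover, the hypothesis that $E_k$ is relatively injective must enter essentially, whereas in your sketch it appears only to place $X$ in $E_{k-1}^\perp$, which is not where it is needed. The paper proves this direction by downward induction on $k$: the base case $k=n$ is immediate since a relatively projective $E_n$ is an honest projective $P_i$ and no $E_j$ with $j<n$ has $i$ in its support; for $k<n$ one applies the braid move $\sigma_k$, uses relative injectivity to exclude the case of a monomorphism $E_k\hookrightarrow E_{k+1}^s$ (which would ruin the covering bookkeeping), checks in the three remaining cases that a covered $E_k$ yields an $E_k'$ covered by terms to its left and that $\Ext(E_k,-)\cong\Ext(E_k',-)$ on $E_{k+1}^\perp$ so that $E_k'$ stays relatively projective, and then contradicts the inductive hypothesis at position $k+1$. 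None of this inductive structure is present in your proposal, so the converse direction remains unproved.
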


\begin{proof}
If $E_k$ is not relatively projective then there is $X\in \cA_{k-1}$ which maps onto $E_k$. Then $X$ is covered by $E_j$ where $j<k$. Since $X$ covers $E_k$, the same holds for $E_k$.

Conversely, suppose $E_k$ is relatively projective. Then we show by downward induction on $k$ that $E_k$ is not covered by $E_j$ for $j<k$.

If $k=n$, the statement holds since $E_n$ being relatively projective implies $E_n=P_i$ is projective. Then $\Hom(P_i,E_j)=0$ implies that vertex $i$ is not in the support of any $E_j$ for $j<n$. So, these cannot cover $E_n$.

Now suppose $k<n$ and the statement holds for $k+1$. Consider the braid move $\sigma_k$ on the exceptional sequence $E_\ast=(E_1,\cdots,E_k,\cdots,E_n)$ producing
\begin{equation}\label{eq: braid move}
   	\sigma_kE_\ast=(E_1,\cdots, E_{k-1},E_{k+1},E_k',E_{k+2},\cdots,E_n). 
\end{equation}
We note that the perpendicular categories $\cA_{k+1}$ are the same in the two cases. Since $E_k$ is relatively injective, we do not have a monomorphism $E_k\hookrightarrow E_{k+1}^s$. So, there are three possible formulas for $E_k'$:
\begin{enumerate}
\item $E_{k+1}\in E_k^\perp$ making $E_k'=E_k$.
\item $\Ext(E_k,E_{k+1})=K^s\neq0$ giving an exact sequence:
\[
	0\to E_{k+1}^s\to E_k'\to E_k\to 0
\]
\item $\Hom(E_k,E_{k+1})=K^s\neq0$ and the universal morphism $E_k\to E_{k+1}^s$ is an epimorphism, not a monomorphism, giving an exact sequence:
\[
	0\to E_k'\to E_k\to E_{k+1}^s\to 0
\]
\end{enumerate}
If $E_k$ is covered by $E_j$ for $j<k$ then we see that $E_k'$ is covered by the same $E_j$ plus $E_{k+1}$. So, $E_k'$ is also covered by objects to its left in $\sigma_kE_\ast$. 
\smallskip

Claim: $E_k'$ is relatively projective in $\sigma_kE_\ast$.

\smallskip
By induction, this would give a contradiction. So, it suffices to prove this claim. In all three cases the six-term exact sequence gives:
\begin{equation}\label{eq: Ek proj iff Ek' proj}
   	\Ext(E_k,X)\cong \Ext(E_k',X) 
\end{equation}
for all $X\in E_{k+1}^\perp$. Since $E_k$ is relatively projective in $E_\ast$, $\Ext(E_k,X)=0$ for all $X\in \cA_k$. But the perpendicular category $\cA_k$ for $E_\ast$ is equal to $X^\perp\cap \cA_{k+1}$ for $\sigma_kE_\ast$. By Lemma \ref{lem0: Ek rel proj if doesn't ext Ek-1perp}, we conclude that $E_k'$ is relatively projective in $\sigma_kE_k$, proving the claim and thus the theorem.
\end{proof}

Since $E_k$ must be either relatively projective or relatively injective, we obtain the following characterization of an object being both relatively projective and relatively injective.

\begin{cor}\label{thm: covering criterion for Ek to be not rel projective}
$E_k$ is not relatively projective if and only if the following both hold.
    \begin{enumerate}
        \item $E_k$ is relatively injective and
        \item $E_k$ is covered by $E_j$ for $j<k$.
    \end{enumerate}
\end{cor}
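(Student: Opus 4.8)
The plan is to derive this corollary directly from Theorem \ref{thm: all terms are rel proj or rel inj} together with Theorem \ref{thm: new characterization of rel proj}, so that essentially no new work is needed beyond reorganizing what has already been proved. The statement to prove is a biconditional: $E_k$ is not relatively projective if and only if both (1) $E_k$ is relatively injective and (2) $E_k$ is covered by $E_j$ for some $j<k$.

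For the forward direction, suppose $E_k$ is not relatively projective. By Theorem \ref{thm: all terms are rel proj or rel inj}, every term is relatively projective or relatively injective (or both), so $E_k$ must be relatively injective; this gives (1). Now that we know $E_k$ is relatively injective, Theorem \ref{thm: new characterization of rel proj} applies and says that $E_k$ is relatively projective if and only if it is \emph{not} covered by $E_j$ for $j<k$. Since by hypothesis $E_k$ is not relatively projective, the contrapositive gives that $E_k$ \emph{is} covered by $E_j$ for some $j<k$, which is (2).

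For the converse direction, assume (1) and (2) hold. Because $E_k$ is relatively injective (by (1)), Theorem \ref{thm: new characterization of rel proj} again applies, so $E_k$ is relatively projective if and only if it is not covered by $E_j$ for $j<k$; but (2) says it \emph{is} covered, hence $E_k$ is not relatively projective, as desired.

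I do not expect any genuine obstacle here: the content is entirely a logical repackaging. The only care needed is to make sure Theorem \ref{thm: new characterization of rel proj} is invoked correctly — it is stated as a biconditional \emph{under the standing hypothesis that $E_k$ is relatively injective}, so in both directions one must first establish relative injectivity before using it (trivial in the converse, and supplied by Theorem \ref{thm: all terms are rel proj or rel inj} in the forward direction). I would write the proof in two short paragraphs ($\Rightarrow$ and $\Leftarrow$) mirroring the above.
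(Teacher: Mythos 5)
Your proof is correct and is exactly the derivation the paper intends: the corollary is stated without proof immediately after the remark ``Since $E_k$ must be either relatively projective or relatively injective, we obtain the following characterization,'' i.e., it is meant to follow by combining Theorem \ref{thm: all terms are rel proj or rel inj} with Theorem \ref{thm: new characterization of rel proj}, which is precisely what you do. Your care in first establishing relative injectivity before invoking Theorem \ref{thm: new characterization of rel proj} (whose biconditional holds only under that standing hypothesis) is the one point that needed attention, and you handle it correctly.
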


Similarly, we have the following.
\begin{thm}\label{thm: covering criterion for Ek to be not rel injective}
$E_k$ is not relatively injective if and only if the following both hold.
    \begin{enumerate}
        \item $E_k$ is relatively projective and
        \item $E_k$ is covered by $E_j$ for $j>k$.
    \end{enumerate}
\end{thm}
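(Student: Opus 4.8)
The plan is to deduce Theorem~\ref{thm: covering criterion for Ek to be not rel injective} from Corollary~\ref{thm: covering criterion for Ek to be not rel projective} by a duality argument, rather than re-running the whole argument of Theorem~\ref{thm: new characterization of rel proj} in the injective direction. Specifically, I would invoke the standard duality $D=\Hom_K(-,K)\colon mod\text-\Lambda \to mod\text-\Lambda\op$, which sends an exceptional sequence $(E_1,\dots,E_n)$ over $\Lambda$ to the exceptional sequence $(DE_n,\dots,DE_1)$ over $\Lambda\op$ (note the reversal of order). Under this duality, $\Hom$ and $\Ext$ are interchanged with the corresponding groups over $\Lambda\op$ with arguments swapped, so the left perpendicular category $\cB_k = {}^\perp(E_1\oplus\cdots\oplus E_{k-1})$ corresponds to the right perpendicular category $(DE_1\oplus\cdots\oplus DE_{k-1})^\perp$ over $\Lambda\op$; projective objects correspond to injective objects and vice versa; epimorphisms correspond to monomorphisms; and crucially the support of $E_k$ at a vertex $i$ equals the support of $DE_k$ at the same vertex $i$, so ``covered by $E_j$, $j<k$'' for the $\Lambda$-sequence becomes ``covered by (the images of) $E_j$, $j<k$'' which, after relabeling the reversed sequence, is exactly ``covered by terms to the right'' in the $\Lambda\op$-sequence.

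The key steps in order: (1) State precisely the duality $D$ and record that $(E_1,\dots,E_n)$ exceptional over $\Lambda$ $\iff$ $(DE_n,\dots,DE_1)$ exceptional over $\Lambda\op$; this is classical and may be cited. (2) Observe that $E_k$ is relatively injective in $(E_1,\dots,E_n)$ if and only if $DE_k$ is relatively projective in $(DE_n,\dots,DE_1)$: indeed $\cB_k$ for the first sequence is taken by $D$ to the category $\cA$ associated to the position of $DE_k$ in the reversed sequence, and $E_k$ injective in $\cB_k$ $\iff$ $DE_k$ projective in $D(\cB_k)$. Dually $E_k$ is relatively projective $\iff$ $DE_k$ is relatively injective. (3) Observe that $E_k$ is covered by $\{E_j : j>k\}$ $\iff$ $DE_k$ is covered by $\{DE_j : j>k\}$, and the latter set is precisely the set of terms strictly to the \emph{left} of $DE_k$ in the reversed sequence $(DE_n,\dots,DE_1)$. (4) Apply Corollary~\ref{thm: covering criterion for Ek to be not rel projective} to $DE_k$ inside $(DE_n,\dots,DE_1)$: $DE_k$ is not relatively projective $\iff$ $DE_k$ is relatively injective and covered by terms to its left. (5) Translate each clause back through $D$ using steps (2) and (3): ``$DE_k$ not relatively projective'' becomes ``$E_k$ not relatively injective,'' ``$DE_k$ relatively injective'' becomes ``$E_k$ relatively projective,'' and ``$DE_k$ covered by terms to its left'' becomes ``$E_k$ covered by $E_j$ for $j>k$,'' which is exactly the claimed statement.

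An alternative, if one prefers to avoid passing to $\Lambda\op$, is to mirror the proof of Theorem~\ref{thm: new characterization of rel proj} line by line: assume $E_k$ is relatively projective, argue by \emph{upward} induction on $k$ (base case $k=1$: $E_1$ relatively injective always, and if also relatively projective nothing forces it to be covered from the right unless it is injective as a module, in which case the socle vertex argument applies), and use the braid move $\sigma_{k-1}$ in the other direction together with the dual of Lemma~\ref{lem0: Ek rel proj if doesn't ext Ek-1perp} (characterizing relative injectivity by vanishing of $\Ext(Y,E_k)$ for $Y\in\cB_k\cap{}^\perp E_{k+1}$). I expect the duality route to be the cleaner one and would present that; the main obstacle there is purely bookkeeping—getting the index reversal and the ``left versus right'' bookkeeping exactly right so that Corollary~\ref{thm: covering criterion for Ek to be not rel projective}, applied to the reversed dual sequence, produces precisely the two clauses in the stated theorem—rather than any genuine mathematical difficulty, since all the analytic content is already contained in the results of this section.
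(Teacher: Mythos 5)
Your duality argument is correct and is essentially what the paper intends: the paper offers no proof beyond the word ``Similarly,'' and passing to $\Lambda\op$ via $D=\Hom_K(-,K)$, reversing the sequence, and citing Corollary~\ref{thm: covering criterion for Ek to be not rel projective} for $(DE_n,\dots,DE_1)$ is a rigorous way to carry that out, since $\Lambda\op$ is again finite-dimensional hereditary and $D$ preserves supports while exchanging relative projectivity and relative injectivity. (One small slip in your aside on the alternative route: it is $E_n$, not $E_1$, that is always relatively injective --- $E_1$ is relatively injective iff it is an injective module, since $\cB_1=mod\text-\Lambda$ --- but this does not affect the duality argument you actually present.)
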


This theorem, together with Theorem \ref{thm: new characterization of rel proj} gives the following which we will use in the next section to prove an application of the characterization.

\begin{cor}\label{cor: roots shift to roots}
Suppose that $E_k$ is both relatively projective and relatively injective in $E_\ast=(E_1,\cdots,E_n)$. Then, in the exceptional sequence $E_\ast'$ given in \eqref{eq: braid move}, $E_k'$ is also both relatively projective and relatively injective.
\end{cor}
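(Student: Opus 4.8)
The statement asserts that the property of being both relatively projective and relatively injective is preserved under the braid move $\sigma_k$. My plan is to exploit the two covering criteria (Corollary \ref{thm: covering criterion for Ek to be not rel projective} and Theorem \ref{thm: covering criterion for Ek to be not rel injective}) together with the fact, already established inside the proof of Theorem \ref{thm: new characterization of rel proj}, that $E_k'$ is relatively projective in $\sigma_kE_\ast$ whenever $E_k$ is relatively projective and relatively injective in $E_\ast$. So half the conclusion is essentially free. The work is to show that $E_k'$ is also \emph{relatively injective} in $\sigma_kE_\ast$.

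Here is how I would organize the argument. First I would invoke the proof of Theorem \ref{thm: new characterization of rel proj}: since $E_k$ is relatively injective (hence not monomorphic into any $E_{k+1}^s$), the isomorphism \eqref{eq: Ek proj iff Ek' proj}, namely $\Ext(E_k,X)\cong\Ext(E_k',X)$ for all $X\in E_{k+1}^\perp$, holds, and combined with Lemma \ref{lem0: Ek rel proj if doesn't ext Ek-1perp} this yields that $E_k'$ is relatively projective in $\sigma_kE_\ast$. For relative injectivity I would argue by contradiction: if $E_k'$ were not relatively injective in $\sigma_kE_\ast$, then by Theorem \ref{thm: covering criterion for Ek to be not rel injective} it would be covered by the objects to its right in $\sigma_kE_\ast$, i.e.\ by $E_{k+2},\dots,E_n$. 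Now I would push this covering back to $E_\ast$ using the structure of the three cases for $E_k'$ listed in the proof of Theorem \ref{thm: new characterization of rel proj}: in each of the short exact sequences relating $E_k$, $E_k'$ and a power of $E_{k+1}$, the support of $E_k$ is contained in the union of the supports of $E_k'$ and $E_{k+1}$. Hence $E_k$ would be covered by $E_{k+1},E_{k+2},\dots,E_n$, i.e.\ by the objects to its right in $E_\ast$. But $E_k$ is relatively projective in $E_\ast$, so Theorem \ref{thm: covering criterion for Ek to be not rel injective} would force $E_k$ to be not relatively injective, contradicting the hypothesis. Therefore $E_k'$ is relatively injective, and being both relatively projective and relatively injective, it is a root; the conclusion follows.

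There is a symmetric route worth mentioning as a sanity check: one could instead dualize, applying the relative-injectivity analogue of Theorem \ref{thm: new characterization of rel proj} (which holds by the standard duality $\cB_k=\,{}^\perp\cA_{k-1}$) to get relative injectivity of $E_k'$ directly, and then recover relative projectivity by the covering argument above run in the other direction. Either way the logical skeleton is: one of the two properties transfers by the $\Ext$/six-term argument already in hand, and the other transfers by chasing the covering condition through the three explicit short exact sequences for $E_k'$.

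The main obstacle I anticipate is purely bookkeeping: making sure the perpendicular categories line up correctly across the braid move, i.e.\ that $\cA_{k+1}$ is genuinely unchanged and that the ``objects to the right of $E_k'$ in $\sigma_kE_\ast$'' are exactly $E_{k+2},\dots,E_n$, so that the covering condition from Theorem \ref{thm: covering criterion for Ek to be not rel injective} translates into the covering condition one needs back in $E_\ast$. The support inclusion $\supp E_k\subseteq \supp E_k'\cup\supp E_{k+1}$ in each of the three cases is immediate from the respective short exact sequence, since support is additive on short exact sequences, so that step should not cause trouble.
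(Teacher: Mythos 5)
Your proposal is correct and follows essentially the same route as the paper: relative projectivity of $E_k'$ via the isomorphism \eqref{eq: Ek proj iff Ek' proj} (available because relative injectivity of $E_k$ rules out a monomorphism $E_k\hookrightarrow E_{k+1}^s$), and relative injectivity by contradiction, transporting the covering of $E_k'$ by $E_{k+2},\dots,E_n$ back to a covering of $E_k$ by $E_{k+1},\dots,E_n$ through the three short exact sequences and invoking Theorem \ref{thm: covering criterion for Ek to be not rel injective}. No substantive differences.
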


\begin{proof}
Since $E_k$ is relatively injective we do not have a monomorphism $E_k\hookrightarrow E_{k+1}^s$. Therefore, we have only the three possibilities listed in the proof of Theorem \ref{thm: new characterization of rel proj}. So, we obtain Equation \ref{eq: Ek proj iff Ek' proj}. This implies $E_k'$ is relatively projective in $E_\ast'$. To see that $E_k'$ is also relatively injective, suppose not. Then, by Theorem \ref{thm: covering criterion for Ek to be not rel injective}, $E_k'$ is covered by $E_{k+2},\cdots,E_n$. Looking at the three possible cases listed in the proof of Theorem \ref{thm: new characterization of rel proj} we see that, in that case, $E_{k+1},E_{k+2},\cdots,E_n$ would cover $E_k$. By Theorem \ref{thm: covering criterion for Ek to be not rel injective}, this would imply $E_k$ is not relatively injective, giving a contradiction. Therefore $E_k'$ must also be relatively injective as claimed.
\end{proof}

\section{Application to probability of being both relatively projective and relatively injective}

Given a complete exceptional sequence $(E_1,\cdots,E_n)$ for $mod\text-\Lambda$ for $\Lambda$ hereditary, we consider the Hasse diagram of the objects $E_k$ partially ordered by inclusion of their supports. We want to characterize the roots of the Hasse diagram (the maximal elements).

\begin{thm}\label{thm: characterize rel proj-rel inj objects as roots}
    Let $(E_1,\cdots,E_n)$ be a complete exceptional sequence. Then, the following are equivalent
    \begin{enumerate}
        \item $E_k$ is both relatively projective and relatively injective.
        \item $E_k$ is a root of the Hasse diagram and is not covered by its children and the other roots.
        \item $E_k$ is not covered by the other objects in the exceptional sequence.
    \end{enumerate}
\end{thm}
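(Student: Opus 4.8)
The plan is to prove the three statements equivalent by establishing $(1)\Leftrightarrow(3)$ together with $(3)\Leftrightarrow(2)$. The implication $(3)\Rightarrow(1)$ is immediate from the covering criteria already proved: if $E_k$ is not covered by all of the other objects, then in particular it is not covered by $\{E_j:j<k\}$, so Corollary~\ref{thm: covering criterion for Ek to be not rel projective} forces $E_k$ to be relatively projective, and it is not covered by $\{E_j:j>k\}$, so Theorem~\ref{thm: covering criterion for Ek to be not rel injective} forces $E_k$ to be relatively injective. For $(3)\Leftrightarrow(2)$ I would argue purely in the finite poset of the objects ordered by inclusion of supports. If some vertex of $\supp E_k$ lies in no other $\supp E_j$, then no $E_l$ can have support properly containing $\supp E_k$, so $E_k$ is a root, and obviously no subcollection covers it either, giving $(2)$. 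Conversely, assume $(2)$: if $E_k$ were covered by $\{E_j:j\ne k\}$, then each such $E_j$ has support either properly contained in $\supp E_k$ — in which case its support lies inside that of a child of $E_k$ (take a maximal element, among the objects strictly below $E_k$, whose support contains $\supp E_j$; it is covered by $E_k$) — or not properly contained in $\supp E_k$, in which case, using that $E_k$ is a root, its support lies inside that of some root $\ne E_k$ (a maximal element above it). Hence the children of $E_k$ together with the other roots would cover $E_k$, contradicting $(2)$; so $(3)$ holds.

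The substance is $(1)\Rightarrow(3)$, which I would prove by induction on $n-k$. When $k=n$, "both relatively projective and relatively injective" just means $E_n=P_i$ is projective (as $E_n$ is automatically relatively injective), and then $\Hom(P_i,E_j)=0$ for all $j<n$ shows the vertex $i$ lies in $\supp E_n$ but in no $\supp E_j$ with $j\ne n$, so $E_n$ is not covered. For $k<n$, apply the braid move $\sigma_k$ to obtain
\[
\sigma_kE_\ast=(E_1,\cdots,E_{k-1},E_{k+1},E_k',E_{k+2},\cdots,E_n),
\]
in which $E_k'$ sits in position $k+1$. By Corollary~\ref{cor: roots shift to roots}, $E_k'$ is again both relatively projective and relatively injective in $\sigma_kE_\ast$, so since $n-(k+1)<n-k$ the inductive hypothesis yields a vertex $v\in\supp E_k'$ lying in no other support appearing in $\sigma_kE_\ast$; as those supports are exactly $\{\supp E_j:j\ne k\}$, we get $v\notin\supp E_j$ for every $j\ne k$. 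Because $E_k$ is relatively injective, the three possibilities for $E_k'$ listed in the proof of Theorem~\ref{thm: new characterization of rel proj} are the only ones, and in each of them $\supp E_k'\subseteq\supp E_k\cup\supp E_{k+1}$: case (1) has $E_k'=E_k$, case (2) has $0\to E_{k+1}^s\to E_k'\to E_k\to0$, and case (3) has $0\to E_k'\to E_k\to E_{k+1}^s\to0$. Since $v\notin\supp E_{k+1}$, it follows that $v\in\supp E_k$, so $E_k$ is not covered by $\{E_j:j\ne k\}$, which is $(3)$.

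The main obstacle will be this inductive step. It rests on two points: first, Corollary~\ref{cor: roots shift to roots}, which makes the property "both relatively projective and relatively injective" survive a braid move, so that the induction even makes sense; and second, the small observation — extracted from the three short exact sequences in the proof of Theorem~\ref{thm: new characterization of rel proj} — that $\supp E_k'\subseteq\supp E_k\cup\supp E_{k+1}$ always, which is exactly what lets a vertex witnessing the non-covering of $E_k'$ in $\sigma_kE_\ast$ be transported back to witness the non-covering of $E_k$ in $E_\ast$. Everything else (braid moves preserve the length and the perpendicular category $\cA_{k+1}$; the distinctness of the terms of an exceptional sequence; the poset bookkeeping in $(2)\Leftrightarrow(3)$) is routine.
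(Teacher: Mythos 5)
Your proposal is correct and follows essentially the same route as the paper: the substance of $(1)\Rightarrow(3)$ in both arguments is to push $E_k$ rightward by braid moves, using Corollary \ref{cor: roots shift to roots} to preserve the property ``relatively projective and relatively injective,'' until the base case where such a last term must be a projective module $P_i$ whose top vertex witnesses the non-covering. The only cosmetic difference is in how that vertex is carried back: the paper moves $E_k$ to the end in one pass and concludes $i\in\supp E_k$ because the dimension vectors of a complete exceptional sequence span $\ZZ^n$, whereas you do one braid move per induction step and transport the vertex via the containment $\supp E_k'\subseteq \supp E_k\cup\supp E_{k+1}$; both are valid.
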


\begin{proof}
Clearly (2) and (3) are equivalent. Also we know that (3) implies (1) since, if $E_k$ is not relatively projective, it is covered by $E_i$ for $i<k$ by Lemma \ref{lem: when is Ek relatively projective} and, if $E_k$ is not relatively injective then it is covered by $E_j$ for $j>k$ by Lemma \ref{lem: when is Ek relatively injective}. Therefore, it suffices to show that (1) implies (3). 

So, suppose that $E_k$ is relatively projective and relatively injective. Then, consider the mutated exceptional sequence:
\[
	E_\ast''=(E_1,\cdots,E_{k-1},E_{k+1},\cdots,E_n,Y)
\] 
Using Corollary \ref{cor: roots shift to roots} several times we get that $Y$ is relatively projective and relatively injective in $E_\ast''$. In other words, $Y$ is a projective module, say $Y=P_i$, the projective cover of the simple module $S_i$. Then none of the objects $E_j$ for $j\neq k$ have $i$ in their support. On the other hand, $i$ must be in the support of $E_k$ since the dimension vectors of the objects $E_j$ in any complete exceptional sequence span $\mathbb Z^n$. Therefore, the $E_j$ for $j\neq k$ cannot cover $E_k$. This proves (3). So, these conditions are equivalent.
\end{proof}

\begin{cor}\label{cor: characterize rel proj-rel inj objects as shifts of projective objects}
    The following are equivalent
      \begin{enumerate}
        \item $E_k$ is both relatively projective and relatively injective.
        \item There is vertex $v_i$ of the quiver of $\Lambda$ which is not in the support of any $E_j$ for $j\neq k$.
        \item There is a projective module $P_i$ so that $(E_1,\cdots,\widehat{E_k},\cdots,E_n,P_i)$ is an exceptional sequence.
                \item There is an injective module $I_i$ so that $(I_i,E_1,\cdots,\widehat{E_k},\cdots,E_n)$ is an exceptional sequence.
     \end{enumerate}  
\end{cor}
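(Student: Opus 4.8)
The plan is to take condition (2) as the hub and prove the chain of equivalences (1) $\Leftrightarrow$ (2), (2) $\Leftrightarrow$ (3), and (2) $\Leftrightarrow$ (4).

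The equivalence (1) $\Leftrightarrow$ (2) is already essentially established inside the proof of Theorem \ref{thm: characterize rel proj-rel inj objects as roots}. That argument shows that if $E_k$ is both relatively projective and relatively injective, then mutating $E_k$ repeatedly to the last position produces a projective module $P_i$, after which no $E_j$ with $j\neq k$ has $v_i$ in its support; this is (1) $\Rightarrow$ (2). Conversely, if $v_i$ lies in the support of no $E_j$ with $j\neq k$, then all the vectors $\undim E_j$ ($j\neq k$) lie in the rank $n-1$ sublattice $\{x_i=0\}\subset\ZZ^n$; since the dimension vectors of a complete exceptional sequence span $\ZZ^n$, the vertex $v_i$ must then lie in $\supp E_k$, so the remaining $E_j$ do not cover $E_k$, which is condition (3) of Theorem \ref{thm: characterize rel proj-rel inj objects as roots} and hence gives (1). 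In effect, (2) is just a reformulation of that condition (3), using that a vertex in nobody else's support automatically lies in $\supp E_k$.

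For (2) $\Leftrightarrow$ (3) I would use two standard facts about a hereditary algebra $\Lambda$: each indecomposable projective $P_i$ is exceptional (it is indecomposable, $\Ext(P_i,-)=0$, and $\End(P_i)$ is the division ring at the vertex $i$), and $\Hom(P_i,M)=0$ if and only if $v_i\notin\supp M$, since $\Hom(P_i,M)$ computes the component of $M$ at $i$. Placing $P_i$ in the last slot, the sequence $(E_1,\dots,\widehat{E_k},\dots,E_n,P_i)$ is exceptional exactly when $\Hom(P_i,E_j)=0=\Ext(P_i,E_j)$ for every $j\neq k$; the Ext-vanishing is automatic from heredity, and the Hom-vanishing says precisely that $v_i$ is in the support of no $E_j$ with $j\neq k$. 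The equivalence (2) $\Leftrightarrow$ (4) is the exact dual: each indecomposable injective $I_i$ is exceptional, $\Ext(-,I_i)=0$, and $\Hom(M,I_i)=0$ iff $v_i\notin\supp M$; placing $I_i$ in the first slot, $(I_i,E_1,\dots,\widehat{E_k},\dots,E_n)$ is exceptional exactly when $\Hom(E_j,I_i)=0=\Ext(E_j,I_i)$ for all $j\neq k$, i.e.\ when $v_i\notin\supp E_j$ for all $j\neq k$.

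I do not expect a genuine obstacle here; the proof is mostly bookkeeping. The only points needing a word of justification are the identification of $\Hom(P_i,M)$ (resp.\ $\Hom(M,I_i)$) with the component of $M$ at $i$, so that vanishing of these groups becomes a support statement, and the input---already invoked in Theorem \ref{thm: characterize rel proj-rel inj objects as roots}---that the dimension vectors of a complete exceptional sequence form a $\ZZ$-basis of $\ZZ^n$, which is what pins the distinguished vertex inside $\supp E_k$ in the converse direction of (1) $\Leftrightarrow$ (2).
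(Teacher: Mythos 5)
Your proposal is correct and matches the argument the paper intends: the corollary is stated without proof precisely because (1)$\Leftrightarrow$(2) is extracted from the proof of Theorem \ref{thm: characterize rel proj-rel inj objects as roots} (mutation of $E_k$ to the last slot producing a projective, plus the spanning of $\ZZ^n$ by the dimension vectors), while (2)$\Leftrightarrow$(3) and (2)$\Leftrightarrow$(4) are the standard identifications of $\Hom(P_i,M)$ and $\Hom(M,I_i)$ with the component of $M$ at the vertex $v_i$. Your bookkeeping is accurate and complete; no gaps.
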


\begin{cor}\label{cor: probability of object being both rel proj and rel inj}
    For a random complete exceptional sequence, the probability that the $k$th term is both relatively projective and relatively injective is $2/h$ where $h$ is the Coxeter number of $\Lambda$.
\end{cor}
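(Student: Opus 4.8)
The plan is to reduce the probability computation to the known enumeration of complete exceptional sequences together with the characterization in Corollary \ref{cor: characterize rel proj-rel inj objects as shifts of projective objects}. By that corollary, the $k$th term $E_k$ of a complete exceptional sequence $(E_1,\dots,E_n)$ is both relatively projective and relatively injective precisely when there is a vertex $v_i$ of the quiver lying in the support of $E_k$ and in the support of no other $E_j$. Equivalently — using parts (3) and (4) of that corollary — completing $(E_1,\dots,\widehat{E_k},\dots,E_n)$ on the right by a projective $P_i$ yields an exceptional sequence, and completing on the left by the corresponding injective $I_i$ also does. So I would count pairs (complete exceptional sequence, position $k$) with this property by instead counting triples consisting of an exceptional collection $(E_1,\dots,E_{n-1})$ of length $n-1$ together with a way to insert a ``projective-type'' term in the $k$th slot.

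First I would fix notation: let $\mathrm{ces}(\Lambda)$ denote the number of complete exceptional sequences. The event in question, for fixed $k$, is a subset of all complete exceptional sequences; its probability is (number of favorable sequences)$/\mathrm{ces}(\Lambda)$. Next I would use the reflection-functor / braid-group action to argue that this count is independent of $k$: mutation by the braid group acts transitively on complete exceptional sequences and the subgroup generated by the $\sigma_j$ with $j\neq k-1,k$ (those not touching slot $k$) moves the ``both-rel-proj-and-rel-inj at slot $k$'' property to itself, while Corollary \ref{cor: roots shift to roots} shows $\sigma_k$ (and by a dual argument $\sigma_{k-1}$) also preserves it. Hence the probability $p_k$ is the same for all $k$, call it $p$. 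Summing over $k$, the expected number of terms in a random complete exceptional sequence that are both relatively projective and relatively injective is $np$.

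Then I would identify this expected number with $2$, which gives $p = 2/n$; since for a Dynkin quiver the number of positive roots is $nh/2$ and $n = \dim$... more precisely one wants $p = 2/h$, so the correct normalization is that the expected count of such ``root'' terms is $2n/h$. To get this I would count, via Corollary \ref{cor: characterize rel proj-rel inj objects as shifts of projective objects}(3), the sequences where slot $k$ is such a term as those of the form $(E_1,\dots,\widehat{E_k},\dots,E_n)$ completable by some $P_i$ on the right; equivalently, starting from a complete exceptional sequence $(F_1,\dots,F_{n-1},P_i)$ one removes $P_i$ and inserts an arbitrary term into slot $k$ producing a valid exceptional sequence — and the number of complete exceptional sequences whose last term is a fixed projective $P_i$, summed over $i$, relates to $\mathrm{ces}(\Lambda)$ by the recursion $\mathrm{ces}(\Lambda) = \sum_i \mathrm{ces}(P_i^\perp)$. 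Combining this recursion with Gabriel-type / Deligne's counting formula $\mathrm{ces}(\Lambda) = n!\, h^n/|W|$ for Dynkin type should yield exactly the ratio $2/h$ after the insertion-slot bookkeeping.

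The main obstacle I anticipate is the last step: turning the support/completion characterization into the exact constant $2/h$ rather than merely proving $p_k$ is independent of $k$ and positive. This requires the precise enumeration formula for complete exceptional sequences (equivalently for reduced reflection factorizations of a Coxeter element) and a careful count of how many choices of insertion term fill slot $k$ while keeping the sequence exceptional — essentially showing that removing a ``root'' term and re-inserting behaves uniformly. If a clean bijective argument is elusive, the fallback is to invoke the correspondence with minimal factorizations of the Coxeter element in the Weyl group referenced in the introduction (via \cite{IT}, \cite{BM}, \cite{CI}), under which the ``both relatively projective and relatively injective'' terms correspond to factors that are simultaneously one-way reflections in the relevant sense, and then cite the known statistic that a uniformly random reduced reflection factorization of a Coxeter element has each factor equal to a simple-type reflection with probability $2/h$.
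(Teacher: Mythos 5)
You start from the right ingredient---Corollary \ref{cor: characterize rel proj-rel inj objects as shifts of projective objects}---but the argument never closes, and you concede as much (``the main obstacle I anticipate is the last step''). The paper's proof is a two-line consequence of that corollary: part (3) says $E_k$ is both relatively projective and relatively injective exactly when $(E_1,\cdots,\widehat{E_k},\cdots,E_n,P_i)$ is a complete exceptional sequence for some projective $P_i$, and this assignment is a bijection, for each fixed $k$, between the favorable sequences and the set of complete exceptional sequences whose \emph{last} term is projective. The constant then comes for free from Ringel's theorem \cite{Ringel13}, which states that a random complete exceptional sequence ends in a projective module with probability $2/h$. You gesture at exactly this bijection in your first paragraph, but then bury it under an expected-value computation, a slot-independence argument, and an attempt to rederive Ringel's statistic from the formula $n!\,h^n/|W|$; none of these are needed, and the last one is where your proof actually stops.

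Two concrete defects in the route you do take. First, the recursion you write, $\mathrm{ces}(\Lambda)=\sum_i\mathrm{ces}(P_i^\perp)$, is false: the correct recursion sums over \emph{all} exceptional modules $E$ (the sequences ending in a fixed $E$ are counted by $\mathrm{ces}(E^\perp)$), and the restricted sum $\sum_i\mathrm{ces}(P_i^\perp)$ is precisely the number of complete exceptional sequences ending in a projective---i.e., the quantity you are trying to compute, not $\mathrm{ces}(\Lambda)$. (For linearly oriented $A_2$ one has $\mathrm{ces}(\Lambda)=3$ while $\sum_i\mathrm{ces}(P_i^\perp)=2$.) Showing that this restricted sum equals $\tfrac2h\,\mathrm{ces}(\Lambda)$ \emph{is} Ringel's theorem; you must either cite it or prove it, and your sketch does neither. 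Second, the slot-independence argument via braid-group transitivity is both unnecessary (the bijection above works for every fixed $k$ directly) and unsound as stated: if every braid generator preserved ``rPI at slot $k$,'' transitivity would force the property to hold for all sequences or none, and in fact Corollary \ref{cor: roots shift to roots} moves the distinguished term from slot $k$ to slot $k+1$ rather than fixing the slot. The clean slot-independence statement is the paper's Corollary \ref{cor: probability for k rPI}, proved by an explicit bijection rather than by transitivity.
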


\begin{proof}
     Corollary \ref{cor: characterize rel proj-rel inj objects as shifts of projective objects} gives, for any fixed $k$, a bijection between the set of complete exceptional sequences in which $E_k$ is both relatively projective and relatively injective and the set of complete exceptional sequences in which the last term $E_n$ is projective. Ringel \cite{Ringel13} has shown that, for a random complete exceptional sequence, the probability that $E_n$ is projective is $2/h$. The result follows.
\end{proof}

{ In response to a question from the referee, we show that there is a correlation between different terms in an exceptional sequence having property rPI (relatively projective and relatively injective). In fact, the probability that two terms $E_i,E_j$ are rPI can be greater, smaller or equal to $4/h^2$. For example, in a linearly ordered quiver of type $A_n$, this probability is $3/h^2$. For linearly oriented $B_n$ or $C_n$ this probability is $4/h^2$. And, for $D_4$ with symmetric orientation (with the 3-valent vertex being either a source or a sink) the probability that two terms are rPI is $4/27>3/27=4/h^2$. The details follow.}

{\begin{cor}\label{cor: probability for k rPI}
Fix $k$ positive integers $j_1<j_2<\cdots<j_k\le n$.
    For a random exceptional sequence $(E_1,\cdots,E_n)$, the probability that $E_{j_i}$ for all $i\le k$ are relatively projective and relatively injective (rPI) is independent of the choice of indices $j_i$. Furthermore, this probability is equal to the probability that the last $k$ terms $E_{n-k+1},\cdots,E_n$ are projective.
\end{cor}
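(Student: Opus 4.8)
The plan is to exhibit, for each index set $J=\{j_1<\dots<j_k\}$, an explicit bijection $\Phi_J$ from the set of complete exceptional sequences in which $E_{j_1},\dots,E_{j_k}$ are all rPI onto the set of complete exceptional sequences whose last $k$ terms $E_{n-k+1},\dots,E_n$ are projective modules, and to realize $\Phi_J$ as a fixed product of braid moves. Since the target set does not depend on $J$, this proves both halves of the statement at once: the cardinality of the source is the same for every $J$, and it equals the cardinality of the target.

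I would define $\Phi_J$ by processing the rPI terms one at a time, from right to left: step $t$ (for $t=1,\dots,k$) takes the term currently in position $j_{k-t+1}$ and slides it all the way to the right by the successive braid moves $\sigma_{j_{k-t+1}},\sigma_{j_{k-t+1}+1},\dots,\sigma_{n-1}$. Two elementary facts are needed. First, a braid move $\sigma_m^{\pm1}$ replaces a consecutive pair $(A,B)$ by $(B,\mu A)$ (respectively $(\mu'B,A)$), and in each of the four possible shapes of a braid mutation a dimension-vector computation gives $\supp A\cup\supp B=\supp(\mu A)\cup\supp B$ (respectively $=\supp A\cup\supp(\mu'B)$). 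Second, Theorem \ref{thm: characterize rel proj-rel inj objects as roots}(3) says that $E_\ell$ is rPI precisely when $\supp E_\ell$ is not contained in the union of the supports of the other $n-1$ terms. Combining the two: a braid move $\sigma_m^{\pm1}$ leaves unchanged the union of the supports of all terms except one in a position $\ell\notin\{m,m+1\}$, so it preserves the rPI-ness of such a term; and the same support inequality forces the mutated term $\mu A$ to be rPI whenever $A$ was, i.e.\ sliding an rPI term one position to the right, or one position to the left, together with the forced mutation, keeps it rPI. The rightward instance of this is Corollary \ref{cor: roots shift to roots}; the leftward instance is its mirror image and follows from the same inequality.

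Granting this, an induction on $t$ shows that after step $t$: the terms $E_{j_1},\dots,E_{j_{k-t}}$ still occupy positions $j_1,\dots,j_{k-t}$ and are still rPI, because every braid move performed so far acts strictly to their right, so these terms never sit in a braided position; the term processed in step $t$ has been carried to position $n$ while remaining rPI, hence, being relatively projective in $\cA_n=mod\text-\Lambda$, is a projective module; and the projective modules produced in the earlier steps have merely been shifted one step leftward, never mutated, so they stay projective. After step $k$ the last $k$ terms are projective, so $\Phi_J$ lands in the target set. As a composition of braid moves $\Phi_J$ is a bijection of the set of all complete exceptional sequences, and running the identical bookkeeping for $\Phi_J^{-1}$ — which reverses the steps, carrying the $k$ trailing projectives leftward back into positions $j_1,\dots,j_k$ by left mutations — shows $\Phi_J^{-1}$ maps the target set into the source set. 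Therefore $\Phi_J$ restricts to a bijection between the two sets.

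The delicate point, and the reason the terms must be processed rightmost-first, is that a braid move need \emph{not} preserve rPI-ness of the two terms in the braided positions: the mutated term $\mu A$ can pick up a vertex that was private to $B$ and thereby destroy $B$'s rPI property. Processing rightmost-first ensures that every braid move used in step $t$ acts strictly to the right of positions $j_1,\dots,j_{k-t}$, so the still-unprocessed rPI terms only ever occur in the harmless ``untouched position'' case of the previous paragraph. (For instance, over $A_3$ with linear orientation $(S_1,S_2,S_3)$ has all three terms rPI, yet $\sigma_2$ sends it to $(S_1,S_3,P_2)$, in which the new middle term $S_3$ is no longer rPI.)
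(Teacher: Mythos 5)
Your proof is correct and follows essentially the same route as the paper: both establish the result by exhibiting a bijection between the set of sequences in which $E_{j_1},\dots,E_{j_k}$ are rPI and the set of sequences whose last $k$ terms are projective, obtained by transporting the rPI terms (rightmost first) to the right end -- where, being rPI in the last position, they must be projective modules -- and reversing the procedure for the inverse. Your packaging of this map as an explicit product of braid moves, with rPI-ness tracked via the support characterization of Theorem \ref{thm: characterize rel proj-rel inj objects as roots} and Corollary \ref{cor: roots shift to roots}, makes the exceptionality of the output and the invertibility automatic, whereas the paper verifies these points directly by support and dimension-vector arguments; the underlying idea is the same.
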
}

{
\begin{proof}
    We give a bijection between the sets $\cB,\cP$ where 
    
    $\cB$ is set of all exceptional sequences in which $E_{j_1},\cdots, E_{j_k}$ are rPI and 
    
    $\cP$ is the set of exceptional sequences in which the last $k$ terms are projective. 

    Given $E_\ast$ in $\cB$, using Theorem \ref{thm: characterize rel proj-rel inj objects as roots}, let $v_i$ be the vertex in the support of $E_{j_i}$ which is not in the support of any other term in the exceptional sequence. Then the corresponding object of $\cP$ is given by removing the terms $E_{j_i}$ and adding, at the right end, the projective modules $P_{v_k},\cdots,P_{v_1}$ in that order. To see that this is an exceptional sequence we note that the other terms do not have any of the vertices $v_i$ in their support. Also, $P_{v_k}$ does not have any of the other vertices $v_1,\cdots,v_{k-1}$ in its support since the dimension vector of $P_{v_k}$ is a linear combination of the dimension vectors of $E_{j_k}$ and the terms to its right. Similar for the other terms. So, there is no homomorphism from $P_{v_i}$ to any $P_{v_j}$ for $j>i$.
    
    Conversely, take any $E_\ast$ in the set $\cP$. So, the last $k$ terms in $E_\ast$ are projective, with tops, say $v_k,\cdots,v_1$. The last term $P_{v_1}$ is the only term in the sequence having support at vertex $v_1$. When we move this to the $j_1$ position the new $E_{j_1}$ will be rPI with $v_1$ as the vertex in its support which is not in the support of any other term. Continue with the next projective $P_{v_2}$. Move this to the left to give $E_{j_2}$ with property rPI. The key point is that $j_2>j_1$. Therefore, $E_{j_2}$ is a linear combination of terms to the right of $E_{j_1}$. So, $E_{j_2}$ will not have $v_1$ in its support and $E_{j_1}$ will remain rPI. Continuing in this way we obtain $E_\ast\in\cB$. 

    These procedures are inverse to each other, so we obtain a bijection $\cB\cong \cP$.
\end{proof}
}

{
By Corollary \ref{cor: probability for k rPI}, the probability that $k$ terms are rPI is the same regardless of where the $k$ terms are. So, we can take these to be the last $k$ terms $E_{n-k+1},\cdots,E_{n}$. To be rPI, they must first be relatively projective. The following lemma characterizes those sequences in which the last $k$ terms are also relatively injective.

\begin{lem}\label{lem: last k terms rPI}
    Suppose the last $k$ terms in a complete exceptional sequence are relatively projective. Then
    \begin{enumerate}
        \item Each term $E_{n-k+i}$ has a simple top at some vertex $v_i$.
        \item $v_i$ is not in the support of any $E_j$ for $j<n-k+i$. 
        \item $E_{n-k+i}$ is uniquely determined by $v_i$ since it is the projective object with top $v_i$ in the perpendicular category
        $\cA_i:=(E_{n-k+i+1}\oplus \cdots\oplus E_n)^\perp$ for $i<k$ and $\cA_k:=mod\text-\Lambda$.
        \item The number of exceptional sequences for any given sequence of vertices $v_1,\cdots,v_k$ is independent of the ordering of the sequence. 
        \item The last $k$ terms are rPI if and only if $\Hom(P_{v_i},P_{v_j})=0$ for $i<j$.
    \end{enumerate}
\end{lem}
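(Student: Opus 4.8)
## Proof Proposal for Lemma \ref{lem: last k terms rPI}

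The plan is to prove the five assertions roughly in the order stated, using the characterizations developed in Sections 2 and 3 together with the structure of braid moves. Throughout, write $F_i := E_{n-k+i}$ for $1 \le i \le k$, so that $F_1, \dots, F_k$ are the last $k$ terms, with $F_k = E_n$ being the rightmost.

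For (1), I would argue by downward induction on $i$, mirroring the argument in Theorem \ref{thm: new characterization of rel proj}. The base case $i = k$: $F_k = E_n$ is relatively projective means $E_n$ is a projective module $P_v$ (as noted in the Remark in Section 1), which indeed has simple top at vertex $v =: v_k$. For the inductive step, $F_i$ is by hypothesis relatively projective in $E_\ast$, hence it is a projective object of the wide subcategory $\cA_i := (F_{i+1} \oplus \cdots \oplus F_k)^\perp$. By the inductive structure, $\cA_i$ is itself a module category over a hereditary algebra (the perpendicular category of an exceptional sequence in a hereditary category is again such — this is standard, cf. \cite{IT}), and a relatively simple object of $\cA_i$ which is projective in $\cA_i$ is the projective cover of a simple object of $\cA_i$. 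I then need that such a "$\cA_i$-simple-top" object has a genuine simple top as a $\Lambda$-module; this follows because the simple objects of $\cA_i$ that are also $\Lambda$-modules have simple $\Lambda$-top (the radical of $F_i$ in $\Lambda$ lies in $\cA_i$ by closure under subobjects once one checks $\Hom/\Ext$ vanishing against the $F_j$, $j > i$, which holds since $F_i$ is relatively projective and the radical embeds in $F_i$). This gives vertex $v_i$ with $F_i = P_{v_i}$-cover-in-$\cA_i$, proving (1) and (3) simultaneously: $F_i$ is the unique indecomposable projective object of $\cA_i$ with top $v_i$ (uniqueness of projective covers in the abelian category $\cA_i$).

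For (2): since $F_i \in (F_{i+1} \oplus \cdots)^\perp$ already by the exceptional sequence condition, I only need that $v_i$ is not in the support of $E_j$ for $j < n-k+i$. Because $F_i = E_{n-k+i}$ is relatively projective, Theorem \ref{thm: new characterization of rel proj} — or rather its contrapositive via Lemma \ref{lem: when is Ek relatively projective} — says $F_i$ is not covered by the $E_j$ to its left; but I want the sharper statement about the single vertex $v_i$. Here I would use Corollary \ref{cor: characterize rel proj-rel inj objects as shifts of projective objects} applied inside the wide subcategory $\cB$ generated by $E_{n-k+i}, \dots, E_n$: within that category $F_i$ is the leftmost term of the exceptional sequence and is relatively projective (being projective in $\cA_i$ restricted appropriately), so by the dimension-vector spanning argument the top vertex $v_i$ of $F_i = P_{v_i}$ (projective in $\cA_i$) cannot lie in the support of $F_{i+1}, \dots, F_k$; and it cannot lie in the support of $E_j$ for $j < n-k+i$ because $\Hom(P_{v_i}\text{-in-}\cA_i, E_j)$-type vanishing forces it — more directly, if $v_i \in \supp E_j$ then $\Hom(P_{v_i}, E_j) \ne 0$ in $\Lambda$, and one transfers this to a nonsplit epimorphism onto a subquotient contradicting relative projectivity of $F_i$ via Lemma \ref{lem: when is Ek relatively projective}.

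Assertion (4) is a counting statement and I would prove it via a bijection induced by braid moves. Given a sequence of distinct vertices $v_1, \dots, v_k$ and an adjacent transposition swapping $v_i$ and $v_{i+1}$, the braid move $\sigma_{n-k+i}$ (in the variant of \eqref{eq: braid move}) on the last $k$ terms gives a bijection between exceptional sequences realizing the first ordering and those realizing the second: by Corollary \ref{cor: roots shift to roots}, relative-projective-and-injective status is preserved by the braid move, and by (3) the new pair of consecutive terms is again determined by the (swapped) vertices — one checks that the braid move $F_i, F_{i+1} \rightsquigarrow F_{i+1}, F_i'$ replaces the projective-with-top-$v_i$-in-$\cA_i$ by the projective-with-top-$v_{i+1}$ in the new $\cA_i$ and vice versa, precisely because $v_i \notin \supp F_{i+1}$. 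Since any two orderings differ by a sequence of adjacent transpositions, the count is independent of the order. Finally, (5) is immediate from Corollary \ref{thm: covering criterion for Ek to be not rel projective} / Theorem \ref{thm: covering criterion for Ek to be not rel injective}: each $F_i$ is already relatively projective, so it is rPI iff it is relatively injective iff it is not covered by the terms to its left; by (2) the vertex $v_i$ distinguishes $F_i$, and chasing supports one sees that $F_i$ is covered by the terms to its left exactly when some $v_j$ with $j < i$ lies in $\supp F_i = \supp(\text{projective-in-}\cA_i\text{ with top }v_i)$, which happens iff $\Hom(P_{v_i}, P_{v_j}) \ne 0$ (equivalently $v_i \in \supp P_{v_j}$, using that the dimension vectors of $P_{v_i}$-in-$\cA_i$ and the genuine $P_{v_j}$ agree on the relevant supports). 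I expect the main obstacle to be the careful transfer between "projective object of the wide subcategory $\cA_i$" and "projective $\Lambda$-module / genuine support," i.e. making precise in step (1)–(2) that the top vertex of the relative projective really behaves like the top of an honest projective module; once that dictionary is pinned down, (3), (4), (5) follow formally from the results already proved.
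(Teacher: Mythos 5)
Your overall plan (downward induction for (1)--(3), support--chasing for (5)) matches the paper's, but you are missing the one structural fact that makes the paper's induction go through, and in its absence several of your steps rest on a claim that is false in general. The paper's proof observes that since $E_n=P_{v_k}$ is projective with simple top $v_k$, its perpendicular category is \emph{literally} $\{X : v_k\notin\supp X\}$, i.e.\ the category of representations of the quiver with vertex $v_k$ deleted (a Serre subcategory, closed under subobjects and quotients); inductively each $\cA_i$ is the module category of a full subquiver, so a relatively projective term is an \emph{honest} projective module over a smaller path algebra, and (1), (2), (3) are immediate. You instead invoke only the general fact that $\cA_i$ is equivalent to $mod\text-\Lambda'$ for some hereditary $\Lambda'$, and then assert that ``the simple objects of $\cA_i$ that are also $\Lambda$-modules have simple $\Lambda$-top.'' That is false for a general wide subcategory: for $Q\colon 1\to 2\leftarrow 3$ the injective $I_2$ is exceptional, the wide subcategory $\add I_2$ has $I_2$ as its unique simple object, and $\mathrm{top}\, I_2=S_1\oplus S_3$ is not simple. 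Your attempted repair (showing $\mathrm{rad}\,F_i\in\cA_i$ via $\Hom/\Ext$-vanishing) also does not close: from $0\to\mathrm{rad}\,F_i\to F_i\to \mathrm{top}\,F_i\to 0$ one only gets that $\Ext(F_j,\mathrm{rad}\,F_i)$ is a quotient of $\Hom(F_j,\mathrm{top}\,F_i)$, which need not vanish, and even $\mathrm{rad}\,F_i\in\cA_i$ would not make the $\cA_i$-top simple as a $\Lambda$-module. The only clean way to get (1)--(3) is to carry the identification ``$\cA_i= mod$ of the quiver with $v_{i+1},\dots,v_k$ deleted'' through the induction.

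Two further concrete problems. For (4) you build a bijection from braid moves and cite Corollary \ref{cor: roots shift to roots}, but that corollary requires the term to be both relatively projective \emph{and} relatively injective, whereas the hypothesis of (4) is only relative projectivity; moreover you do not verify that relative projectivity of the \emph{other} swapped term is preserved in its new position (its right-perpendicular category changes). The paper's argument for (4) is much shorter: once (1)--(3) are known, the last $k$ terms are determined by the ordered vertex sequence and $(E_1,\dots,E_{n-k})$ is an arbitrary complete exceptional sequence for $Q$ with $v_1,\dots,v_k$ deleted, which depends only on the \emph{set} of vertices. For (5) you have the covering criterion reversed: since each $F_i$ is relatively projective, it is rPI iff it is not covered by the terms to its \emph{right} (Theorem \ref{thm: covering criterion for Ek to be not rel injective}), not to its left; the correct chase is that $v_i\notin\supp P_{v_j}$ for $j>i$ forces $v_i\notin\supp E_{n-k+j}$, which together with (2) makes $v_i$ a private vertex of $F_i$ and hence $F_i$ rPI by Corollary \ref{cor: characterize rel proj-rel inj objects as shifts of projective objects}; the forward direction, which you do not really address, follows from the bijection $\cB\cong\cP$ of Corollary \ref{cor: probability for k rPI}, which exhibits $(P_{v_k},\dots,P_{v_1})$ as an exceptional sequence.
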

}

{
\begin{proof} We do downward induction on $i$. For $i=k$, the statements are all true since $E_n$ is projective with simple top at some vertex $v_k$. Then $\cA_{k-1}=E_n^\perp$ is the category of representations of the quiver $Q$ with vertex $v_k$ deleted. Denote this $Q_{v_k}$. $E_{n-1}$ is relatively projective when it is a projective representation of $Q_{v_k}$. Thus $E_{n-1}$ has a simple top at a vertex $v_{k-1}$. Then $\cA_{k-1}$ is the category of representations of $Q_{v_k,v_{k-1}}$, the quiver $Q$ with vertices $v_k,v_{k-1}$ deleted. Continuing in this way, we see that (1), (2) and (3) hold.

For (4) we note that $(E_1,\cdots,E_{n-k})$ is an exceptional sequence for the quiver $Q$ with vertices $v_1,\cdots,v_k$ deleted and this is independent of the order of these vertices.

Finally, to show (5) we note first that, if $E_{n-k+1},\cdots,E_n$ are rPI then, by the bijection $\cB\cong \cP$ in the proof of Corollary \ref{cor: probability for k rPI}, $(P_{v_k},\cdots,P_{v_1})$ is an exceptional sequence. So, $\Hom(P_{v_i},P_{v_j})=0$ for $i<j$. Equivalently, there is no path in the quiver from $v_j$ to $v_i$. Conversely, suppose this second condition. Then $v_i$ is not in the support of any $E_{n-k+j}$ for $J>i$ since the support of $E_{n-k+j}$ is contained in the support of $P_{v_j}$. Therefore, by Corollary \ref{cor: characterize rel proj-rel inj objects as shifts of projective objects}, each $E_{n-k+i}$ is rPI.
\end{proof}
}

{
\begin{exa}
    For a quiver of type $A_n$, we know by \cite{Iprob} that the probability of $E_{n+1-j}$ being relative projective is $\frac {j+1}h=\frac{j+1}{n+1}$, and these events are independent. Thus
    \[=\PP(E_{n-k+1},\cdots,E_{n} \text{  relatively projective})=\frac{(k+1)!}{h^k}=\frac{(k+1)!}{(n+1)^k}.
    \]
    In the special case when $A_n$ is the linearly ordered quiver $1\to 2\to\cdots\to n$, we have $\Hom(P_{v_i},P_{v_j})=0$ if and only if $v_i<v_j$. Out of the $k!$ possible orderings, this is the only ordering of $v_i$ satisfying this condition. So, by the lemma above we get
        \[=\PP(E_{n-k+1},\cdots,E_{n} \text{  rPI})=\frac1{k!}\frac{(k+1)!}{h^k}=\frac{k+1}{h^k}.
    \]
    By Corollary \ref{cor: probability for k rPI}, $E_i$ being rPI are not independent events in this case.
\end{exa}
}

{
\begin{exa}
    Similarly, for a quiver of type $B_n$ or $C_n$, we know by \cite{IS2} that the probability of $E_{n-1-j}$ being relatively projective is $\frac{2j}{h}=\frac jn$ and these events are independent. So, in this case,
    \[=\PP(E_{n-k+1},\cdots,E_{n} \text{  relatively projective})=\frac{k!}{n^k}.
    \]
    As in the previous example, when the quiver is linearly oriented, we divide by $k!$ to get
        \[=\PP(E_{n-k+1},\cdots,E_{n} \text{ rPI})=\frac{1}{n^k}=\frac{2^k}{h^k}=\left(\frac2h\right)^k.
    \]
So, these events are independent. By Corollary \ref{cor: probability for k rPI}, $E_i$ being rPI are independent events.
\end{exa}
}

{
\begin{exa}
    Finally, we consider the quiver $D_4$ with symmetric orientation: 
\[
\xymatrixrowsep{10pt}
\xymatrixcolsep{30pt}
\xymatrix{
&& \bullet\\
& \bullet\ar[ur]\ar[dr]\ar[r]&\bullet\\
&&\bullet}
\]
$D_4$ has $162$ exception sequences and $h=6$. When the quiver has symmetric orientation, we can use Lemma \ref{lem: last k terms rPI}
to determine that there are $30$ exceptional sequences in which the last two terms are rPI. Thus:
\[
\PP(E_3,E_4 \text{ rPI})=\frac{30}{162}=\frac5{27}>\frac 3{27}=\frac4{h^2}.
\]
\end{exa}
}

\section{Bijection between positive clusters and projectively signed exceptional sequences}

We will examine the Igusa-Todorov bijection between clusters and signed exceptional sequences to show that this gives a bijection between positive clusters and projectively signed exceptional sequences where \emph{projectively signed} means terms $E_j$ are allowed to be negative only if they are not relatively injective. To match the notation above, we index incomplete exceptional sequences as $(E_k,\cdots,E_n)$ indicating that they should be completed by adding terms to the left. Thus $E_n$ is always relatively injective.

\begin{thm}[Igusa-Todorov] Over any hereditary algebra $\Lambda$ of rank $n$ and $1\le k\le n$ there is a bijection between ordered partial clusters $(T_k,T_{k+1},\cdots,T_n)$ in $mod\text-\Lambda\cup \Lambda[1]$ and signed exceptional sequences $(E_k,\cdots,E_n)$ in $mod\text-\Lambda\cup mod\text-\Lambda[1]$. Furthermore, this bijection is uniquely determined by the following linear condition.

($\ast$) $\undim T_k-\undim E_k$ is a linear combination of $\undim E_j$ for $k<j\le n$.

\noindent In particular, $T_n=E_n$.
\end{thm}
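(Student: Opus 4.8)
The plan is to reconstruct the Igusa--Todorov bijection recursively on the rank $n$ and prove existence, uniqueness, and the linear condition $(\ast)$ simultaneously by induction. First I would set up the base case: when the underlying algebra has rank $0$ there is nothing to check, and when $k=n$ there is only one partial cluster $(T_n)$ for each choice of rigid indecomposable $T_n \in mod\text-\Lambda \cup \Lambda[1]$, and correspondingly only one signed exceptional sequence $(E_n)$, so we are forced to set $E_n = T_n$; this makes the ``in particular'' clause automatic and shows $(\ast)$ holds vacuously at $k=n$.

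For the inductive step the key idea is that the last term $E_n$ (equivalently $T_n$) determines a rank $n-1$ situation: if $E_n$ is an honest module, then $E_n^\perp$ (the right perpendicular category inside $mod\text-\Lambda$) is a hereditary abelian category equivalent to $mod\text-\Lambda'$ for a hereditary algebra $\Lambda'$ of rank $n-1$, and the remaining data $(T_k,\dots,T_{n-1})$ and $(E_k,\dots,E_{n-1})$ live, after the appropriate translation, in this smaller category; if $E_n = P[1]$ is a shifted projective, one instead passes to the perpendicular category of $P$ and uses the corresponding statement for partial clusters in $mod\text-\Lambda'\cup\Lambda'[1]$. In either case I would invoke the inductive hypothesis to get a bijection between $(T_k,\dots,T_{n-1})$ and $(E_k,\dots,E_{n-1})$ in the rank $n-1$ setting, and then check that composing with the embedding back into the rank $n$ setting is compatible with $(\ast)$: the point is that $\undim T_k - \undim E_k$, being a linear combination of $\undim E_j$ for $k<j\le n-1$ in the Grothendieck group of the perpendicular category, remains such a combination (now possibly involving $\undim E_n$ as well) in $K_0(mod\text-\Lambda)$, because the embedding of Grothendieck groups is the inclusion of the sublattice orthogonal to $E_n$ (resp.\ to $P$). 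Uniqueness follows because $(\ast)$ pins down $\undim E_k$ from $\undim T_k$ modulo the span of $\undim E_j$, $j>k$, hence (since dimension vectors of exceptional objects in a fixed perpendicular category are determined by their class, and the relevant objects are rigid indecomposables) pins down $E_k$ itself.

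The main obstacle I anticipate is making the recursive reduction genuinely precise: one must verify that ``deleting the last term'' really does land the truncated cluster $(T_k,\dots,T_{n-1})$ inside the perpendicular category, i.e.\ that compatibility of $T_n$ with the earlier $T_j$ forces the $T_j$ (suitably interpreted) to be objects of $E_n^\perp \cup E_n^\perp[1]$, and dually that a signed exceptional sequence ending in $E_n$ restricts to a signed exceptional sequence in the perpendicular category. This is where the combinatorics of which case ($T_n$ a module vs.\ $T_n$ shifted) we are in interacts with the structure of clusters, and it is the step that needs the careful bookkeeping of the original Igusa--Todorov argument. I would also flag that the existence half of $(\ast)$ requires knowing that the $E_j$ for $j>k$ actually form a basis of the appropriate sublattice so that ``linear combination of $\undim E_j$'' is the full complement of $\undim E_k$'s coset --- this is standard for complete exceptional sequences and their truncations over hereditary algebras, and I would cite it rather than reprove it. Once these reductions are in place, existence, uniqueness, and the explicit description all fall out of the induction, and the final sentence $T_n = E_n$ is just the $k=n$ base case read off at the top of the recursion.
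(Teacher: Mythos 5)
Your overall strategy --- induction on the length of the sequence, stripping off the last term $T_n=E_n$ and reducing to the perpendicular category, with $(\ast)$ forcing uniqueness because the classes $\undim E_j$, $j>k$, span the orthogonal complement of $K_0(\cA_k)$ --- is exactly the skeleton of the argument the paper uses (it derives this statement as the $m=1$ case of Theorem \ref{thm: bijection for m-clusters}, whose proof is an induction of precisely this shape). So the route is the same; the issue is with the step you yourself flag as the ``main obstacle.''

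That step is a genuine gap, and it is more than bookkeeping. It is \emph{not} true that compatibility of $T_j$ with $T_n$ forces $T_j$ to lie in $E_n^\perp\cup E_n^\perp[1]$: for two modules at the same level, compatibility only means ext-orthogonality, $\Ext(T_j,T_n)=0=\Ext(T_n,T_j)$, and there may be nonzero homomorphisms in both directions, so the truncated cluster does not sit inside the perpendicular category. The paper's induction goes through a nontrivial bijection $\sigma_{T[k]}:\cE^m(T^\perp)\to\cE^{T[k]}$ (the Key Lemma, Lemma \ref{key lemma}, quoted from \cite{mExcSeq}; in the $m=1$ case this is the corresponding lemma of \cite{IT13}): an object $X[i]$ of $T^\perp$ that is already compatible with $T[k]$ is fixed, and otherwise it is replaced by $Y[j]$ where $(T,Y)$ is the braid mutation of $(X,T)$ and the shift $j\in\{i,i-1\}$ is chosen so that $(-1)^i\undim X-(-1)^j\undim Y$ is a multiple of $\undim T$; one must also prove that this map preserves compatibility in both directions. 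Without constructing this mutation-twisted correspondence and verifying that it is a compatibility-preserving bijection, your inductive step does not close: you cannot apply the rank-$(n-1)$ hypothesis to $(T_k,\dots,T_{n-1})$ because those objects are not objects of the smaller category. The sign/shift bookkeeping in $\sigma_{T[k]}$ is also exactly what makes the linear condition $(\ast)$ propagate through the induction, so it cannot be deferred.
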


\begin{rema}\label{rem: pm exc seq have more properties} This is the special case of Theorem \ref{thm: bijection for m-clusters} when $m=1$. In fact Theorem \ref{thm: bijection for m-clusters} tells us more about signed exceptional sequences: 
\begin{enumerate}
\item[(a)] If $T_k$ is projective, then $E_k$ is relatively projective with the same sign as $T_k$.
\item[(b)] If $E_k$ is both relatively projective and relatively injective, then $T_k$ has the same sign as $E_k$. (This follows from $(\ast)$.)
\item[(c)] If $T_k,E_k$ have different signs then $E_k$ is negative and relatively projective.
\end{enumerate}
\end{rema}

\begin{cor}\label{cor: projectively signed exc seq give positive partial clusters}
The bijection between signed exceptional sequences and ordered partial clusters sends a signed exceptional sequences $(E_k,\cdots,E_n)$ to an ordered positive partial clusters $(T_k,\cdots,T_n)$ in $mod\text-\Lambda$ if and only if the exceptional sequence is projectively signed.
\end{cor}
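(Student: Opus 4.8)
The plan is to prove the two implications separately, using only the sign-compatibility facts recorded in Remark~\ref{rem: pm exc seq have more properties}, the linear characterization $(\ast)$ of the Igusa-Todorov bijection, and the covering criterion of Theorem~\ref{thm: covering criterion for Ek to be not rel injective}. The picture behind the proof is a dictionary: a term $T_j$ of the cluster is negative exactly when $E_j$ is negative and relatively injective. Given the two halves of this dictionary the corollary is immediate, since $(T_k,\dots,T_n)$ is a positive partial cluster iff none of its terms is negative, iff no $E_j$ is simultaneously negative and relatively injective, which is precisely the condition that $(E_k,\dots,E_n)$ be projectively signed.

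For ``positive cluster $\Rightarrow$ projectively signed'' I argue the contrapositive. If the sequence is not projectively signed, some $E_j$ is negative and relatively injective. A negative term of a signed exceptional sequence is relatively projective by definition, so $E_j$ is both relatively projective and relatively injective, and Remark~\ref{rem: pm exc seq have more properties}(b) forces $T_j$ to have the same sign as $E_j$; hence $T_j$ is negative and the cluster is not positive.

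For ``projectively signed $\Rightarrow$ positive cluster'' I again use the contrapositive: assuming some $T_j$ is negative I produce a negative relatively injective term. Pick $k$ maximal with $T_k$ negative. Then $T_{k+1},\dots,T_n$ are honest $\Lambda$-modules, and $T_k$, being an indecomposable negative member of a cluster in $mod\text-\Lambda\cup\Lambda[1]$, is necessarily of the form $P_v[1]$ for an indecomposable projective $P_v$ with top $S_v$. First $E_k$ must be negative: if it were positive then $E_k$ and $T_k$ would have opposite signs, contradicting Remark~\ref{rem: pm exc seq have more properties}(c); write $E_k=M[1]$ with $M$ relatively projective in $\cA_k$. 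Now the crucial step: compatibility of $P_v[1]$ with each of the modules $T_{k+1},\dots,T_n$ in the cluster category, together with $\tau P_v=0$, forces $\Hom_\Lambda(P_v,T_j)=0$, i.e. $(\undim T_j)_v=0$ for $j>k$. Feeding this into $(\ast)$ and inducting downward from $\undim E_n=\undim T_n$ yields $(\undim E_j)_v=0$ for all $j>k$, so $v$ lies in no support $\supp E_j$ with $j>k$. Applying $(\ast)$ once more at index $k$, and using the sign convention $\undim(N[1])=-\undim N$ that makes $(\ast)$ meaningful, gives $(\undim M)_v=-(\undim T_k)_v=(\undim P_v)_v\neq0$, so $v\in\supp M$. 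If $E_k$ were not relatively injective, Theorem~\ref{thm: covering criterion for Ek to be not rel injective}, applied to the exceptional sequence of underlying modules obtained by replacing $E_k$ with $M$, would give $\supp M\subseteq\bigcup_{j>k}\supp E_j$, contradicting $v\in\supp M$ and $v\notin\supp E_j$ for $j>k$. Hence $E_k$ is relatively injective, and being negative it shows the sequence is not projectively signed.

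The one genuinely substantive step is the middle of the last paragraph: turning cluster-category compatibility of $P_v[1]$ with the remaining $T_j$'s into the linear-algebra fact $(\undim E_j)_v=0$ for $j>k$ and then reading off $v\in\supp M$ from $(\ast)$ at index $k$. This is the step I expect to need the most care to phrase correctly, mainly in keeping the sign conventions for shifted objects straight and in checking that the Section~\ref{ss: char of rel pro and rel inj} criterion (which rests on Theorem~\ref{thm: all terms are rel proj or rel inj}) applies verbatim to the exceptional sequence of underlying modules. Everything else --- how the signs of negative terms behave, the shape of the negative part of a cluster, and the downward induction through $(\ast)$ --- is bookkeeping.
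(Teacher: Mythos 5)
Your proof is correct and follows essentially the same route as the paper's argument (which appears as the proof of the $m$-cluster version, Corollary \ref{cor: projectively colored exc seq give positive m-clusters}, and is specialized to this corollary): one direction via the sign-preservation property (b) for terms that are both relatively projective and relatively injective, the other by using compatibility with the shifted projective $P_v[1]$ plus the linear condition $(\ast)$ to locate a vertex $v$ in $\supp E_k$ but in no $\supp E_j$ for $j>k$, forcing $E_k$ to be relatively injective. The only differences are organizational --- you work at the maximal negative index rather than inducting from the left, and you invoke the covering criterion of Theorem \ref{thm: covering criterion for Ek to be not rel injective} instead of Lemma \ref{lem: when is Ek relatively injective} --- and you usefully spell out the downward induction on dimension vectors that the paper leaves implicit.
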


\begin{proof} This is a special case of Corollary \ref{cor: projectively colored exc seq give positive m-clusters} with a similar proof.
\end{proof}


\section{Extension to $m$-exceptional sequences}

In \cite{mExcSeq}, a bijection is given between $t$-tuples of (isomorphism classes of indecomposable) compatible objects of $\cC^m$ and $m$-exceptional sequences of length $t=n-s-1$.

Recall that, for $\Lambda$ a finite dimensional hereditary algebra over any field and $m\ge1$, the \emph{$m$-cluster category} of $\Lambda$ is defined to be
\[
	\cC^m(\Lambda):=\cD^b(mod\text-\Lambda)/\tau^{-1}[m].
\]
We represent objects of $\cC^m(\Lambda)$ by objects in the fundamental domain of $\tau^{-1}[m]$ which we take to be
\begin{equation}\label{eq: fundamental domain of Cm}
	mod\text-\Lambda[0]\cup \cdots \cup mod\text-\Lambda[m-1]\cup \Lambda[m].
\end{equation}
An \emph{$m$-cluster} is a set of $m$ (nonisomorphic) compatible indecomposable objects in this fundamental domain where $X[j], Y[k]$ are \emph{compatible} if one of the following hold
\begin{enumerate}
\item $j<k$ and $X\in Y^\perp$, i.e., $(X,Y)$ is an exceptional pair,
\item $j>k$ and $Y\in X^\perp$, i.e., $(Y,X)$ is an exceptional pair
\item $j=k$ and $X,Y$ are ext-orthogonal: $\Ext_\Lambda(X,Y)=0=\Ext_\Lambda(Y,X)$.
\end{enumerate}
Thus, $Y[k]$ is compatible with $P_i[m]$ for $k<m$ if and only if $\Hom_\Lambda(P_i,Y)=0$, i.e., $i$ is not in the support of $Y$. We consider $t$-tuples of compatible objects $(T_s,\cdots,T_n)$, where $s=n-t-1$, in the fundamental domain \eqref{eq: fundamental domain of Cm}. When $s=1$ ($t=m$) these are ordered $m$-clusters.

An \emph{$m$-exceptional sequences} is defined to be a sequence of objects $(E_s,\cdots,E_n)$ of the form $E_i=X_i[j_i]$ where $(X_s,\cdots,X_n)$ is an exceptional sequence and $0\le j_i\le m$ where the \emph{level} $j_i$ of $E_i$ is allowed to be equal to $m$ only if $X_i$ is relatively projective in the completion of the exceptional sequence to the left:
\[
	X_\ast=(X_1,\cdots,X_s,\cdots,X_n).
\]
If $X$ is a module with dimension vector $\undim X\in \mathbb Z^n$, we let
\[
    \undim X[j]:=(-1)^j\undim X\in \mathbb Z^n.
\]

The following is an extended version of Theorem 2.1.1 from \cite{mExcSeq}.

\begin{thm}\label{thm: bijection for m-clusters}
There is a bijection between the set of isomorphism classes of $m$-exceptional sequences of length $t=n-s+1$ and the set of isomorphism classes of ordered $t$-tuples of pairwise compatible objects in the $m$-cluster category for any finite dimensional hereditary algebra $\Lambda$. This bijection is uniquely determined by the following formulas for each $i\le n$:
\begin{enumerate}
    \item $\undim T_i-\undim E_i$ is an integer linear combination of $\undim T_j$ for $i<j\le n$ (and an integer linear combination of $\undim E_j$ for $i<j\le n$).
    \item If $E_i$ is in level $j$ then $T_i$ is in level $j$ or $j-1$.
\end{enumerate}
Furthermore, 
\begin{enumerate}
    \item[(a)]If $T_i=P[k]$ is a projective module with level $k$ then $E_i$ has the same level and is relatively projective in the corresponding exceptional sequence.
    \item[(b)] If $E_i$ is both relatively projective and relatively injective then it has the same level as $T_i$.
    \item[(c)]Also, if $E_i,T_i$ have different levels then $E_i$ is relatively projective (and $T_i$ is not projective by (a) and $E_i$ is not relatively injective by (b)).
\end{enumerate} 

\end{thm}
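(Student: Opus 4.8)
The plan is to establish the bijection by the same strategy used in the proof of Theorem 2.1.1 in \cite{mExcSeq}: one defines a map in one direction using the linear conditions (1) and (2) and induction on the length $t$, and then verifies that it is well-defined, injective, and surjective. The heart of the construction should be a recursion on the leading term. Given an $m$-exceptional sequence $(E_s,\cdots,E_n)$, the terminal sub-sequence $(E_{s+1},\cdots,E_n)$ is again an $m$-exceptional sequence (of length $t-1$) for the wide subcategory $\cA_s$ (equivalently, for the smaller hereditary algebra obtained from the perpendicular category), so by induction it already corresponds to a $(t-1)$-tuple of pairwise compatible objects $(T_{s+1},\cdots,T_n)$. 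One then produces $T_s$ from $E_s$ so that condition (1) holds, i.e. $\undim T_s-\undim E_s\in \sum_{j>s}\ZZ\,\undim T_j$; the existence and uniqueness of such a $T_s$ inside the fundamental domain \eqref{eq: fundamental domain of Cm}, compatible with all the $T_j$, is exactly the content that must be checked, and condition (2) pins down the level (either $j$ or $j-1$ when $E_s$ has level $j$). The inverse map runs the same recursion backwards, extracting $E_s$ from $T_s$ modulo the lattice spanned by the later dimension vectors.

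First I would set up the inductive framework and reduce to the rank-one (single term) base case, where the statement is the assertion that over a hereditary algebra of rank $1$ there are exactly $m+1$ $1$-exceptional sequences (the unique exceptional module in levels $0$ through $m$) matching the $m+1$ objects of $\cC^m$ of rank $1$, with the level-$m$ term corresponding to $\Lambda[m]$ and being relatively projective (it is the projective module). Next I would carry out the inductive step: fixing the bijection for the tail, I would show that the set of objects $T$ in the fundamental domain that are compatible with each $T_j$ ($j>s$) and satisfy $\undim T\equiv \undim E_s \pmod{\sum_{j>s}\ZZ\,\undim T_j}$ consists of exactly the objects arising from the $m+1$ choices of level for the rank-one exceptional object $X_s$ inside $\cA_s$, and that condition (2) selects a unique one. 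Here the compatibility definition (cases (1)--(3) and the special role of $P_i[m]$) has to be matched carefully against the perpendicularity relations defining the exceptional sequence, using that $X_s$ relatively projective in $X_\ast$ is equivalent to $\Hom(P_i,X)=0$-type support statements already available from Section 3.

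Then I would prove the three supplementary assertions (a), (b), (c). For (a): if $T_i=P[k]$ with $P$ a projective of the ambient perpendicular category, then the linear condition forces $E_i$ to be the relatively projective object with that top in $\cA_i$, and level considerations (condition (2)) together with the fact that projectives sit at the top of the Hasse diagram force the levels to agree. For (b): if $E_i$ is both relatively projective and relatively injective, I would invoke Corollary \ref{cor: characterize rel proj-rel inj objects as shifts of projective objects} (there is a vertex $v_i$ in the support of $E_i$ in no other term), so $\undim E_i$ is not in the lattice spanned by the later dimension vectors in that coordinate; combined with condition (1) this forces $\undim T_i=\pm\undim E_i$ with the sign consistent, i.e. equal levels. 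Assertion (c) is then a formal consequence: by Theorem \ref{thm: all terms are rel proj or rel inj} each $X_i$ is relatively projective or relatively injective, and if the levels of $E_i$ and $T_i$ differ then (b) rules out relatively-injective-and-projective, (a) rules out $T_i$ projective, so $E_i$ must be relatively projective and not relatively injective.

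The main obstacle I expect is the well-definedness and uniqueness of $T_s$ in the inductive step: one must show that exactly one representative in the fundamental domain \eqref{eq: fundamental domain of Cm} is simultaneously (i) congruent to $\undim E_s$ modulo the sublattice generated by the $\undim T_j$, $j>s$, (ii) pairwise compatible with all $T_j$, and (iii) of the level prescribed by (2); and that this representative is precisely $X_s$ shifted appropriately. Controlling compatibility across level boundaries — especially the asymmetric condition involving $P_i[m]$ and the support of $X_s$ — is the delicate point, and it is where the relatively-projective hypothesis on level-$m$ terms is genuinely used. Everything else is bookkeeping with the Grothendieck-group lattice and the already-established support characterizations from Sections 2 and 3.
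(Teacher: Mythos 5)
Your proposal recurses on the \emph{first} term: by induction the tail corresponds to $(T_{s+1},\dots,T_n)$ and then $T_s$ is to be produced from $E_s$ via the congruence condition. The paper recurses on the \emph{last} term: it sets $T_n=T[k]$, uses the Key Lemma (Lemma \ref{key lemma}, from \cite{mExcSeq}) to identify the objects compatible with $T[k]$ with $\cE^m(T^\perp)$ via a bijection $\sigma_{T[k]}$ that \emph{preserves compatibility}, pulls $(T_s,\dots,T_{n-1})$ back into $\cE^m(T^\perp)$, and applies the inductive hypothesis there. This difference is not cosmetic. The step you flag as ``the main obstacle'' --- existence and uniqueness of a representative $T_s$ in the fundamental domain that is simultaneously congruent to $\undim E_s$ modulo the later dimension vectors, compatible with every $T_j$, $j>s$, and of the prescribed level --- is precisely the content of the theorem, and your outline defers it rather than proving it. In the paper's route this difficulty is factored into $n-s$ applications of the one-object Key Lemma (each $\sigma_{T[k]}$ handles compatibility with a single object and the congruence modulo a single dimension vector), and the ``furthermore'' clause of that lemma is what makes compatibility with \emph{all} later terms propagate through the induction. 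A first-term recursion would need a multi-object analogue of the Key Lemma, which you neither state nor prove. (Also, a small miswiring: $(E_{s+1},\dots,E_n)$ is an $m$-exceptional sequence for $\Lambda$, completed on the left; it is not a sequence ``for the wide subcategory $\cA_s$'' --- those objects generate $\cB_{s+1}$ and do not lie in $\cA_s=(E_{s+1}\oplus\cdots\oplus E_n)^\perp$.)

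There is a second, logical gap in your proof of (c). From (b) you get that $E_i$ is not both relatively projective and relatively injective, and from Theorem \ref{thm: all terms are rel proj or rel inj} that it is at least one; this yields ``exactly one,'' but does not determine \emph{which} one, so you cannot conclude that $E_i$ is relatively projective. The paper closes this by Proposition \ref{prop: Y projective means X rel proj and inj}(1): when the level drops ($j=i-1$), the mutation $(X,T)\to(T,Y)$ is realized by a short exact sequence $0\to X\to T^p\to Y\to 0$, which exhibits a non-split monomorphism out of $X$ into an object of $\cB_{k+1}$, so $X$ is \emph{not} relatively injective, and only then does Theorem \ref{thm: all terms are rel proj or rel inj} force $X$ to be relatively projective. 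Your argument for (b) (a vertex in the support of $E_i$ outside the supports of all other terms forces $\undim T_i=\pm\undim E_i$ with matching sign) does match the paper's. Your sketch of (a) is plausible but again rests on the unproved inductive step; the paper derives it from Proposition \ref{prop: Y projective means X rel proj and inj}(2) via the root characterization of Theorem \ref{thm: characterize rel proj-rel inj objects as roots}.
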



First we derive a corollary extending Corollary \ref{cor: projectively signed exc seq give positive partial clusters} to the $m$-cluster case. 

\begin{cor}\label{cor: projectively colored exc seq give positive m-clusters}
The bijection between $m$-exceptional sequences of length $t$ and $t$-tuples of compatible objects of $\cC^m(\Lambda)$ sends an $m$-exceptional sequences 
\[
(E_s,\cdots,E_n)
\]
to a $t$-tuple 
\[
(T_s,\cdots,T_n)
\]
of compatible objects in $\bigcup_{0\le j<m} mod\text-\Lambda[j]$ if and only if the $m$-exceptional sequence contains no relatively injective terms with level $m$.
\end{cor}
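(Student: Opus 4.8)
The plan is to deduce this corollary directly from Theorem~\ref{thm: bijection for m-clusters}, which is assumed. The statement to prove is an ``if and only if'': the tuple $(T_s,\dots,T_n)$ lies entirely in the levels $0,\dots,m-1$ (equivalently, no $T_i$ has level $m$, i.e.\ no $T_i$ is of the form $P_i[m]$) precisely when the $m$-exceptional sequence $(E_s,\dots,E_n)$ has no term $E_i$ which is both in level $m$ and relatively injective. So I would phrase everything in terms of the level function on both sides and use the three extra properties (a), (b), (c) listed in Theorem~\ref{thm: bijection for m-clusters}, together with property (2) that $\undim$-matched terms differ in level by at most one.

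First I would prove the easy direction ($\Leftarrow$). Suppose no term $E_i$ has level $m$ and is relatively injective; I claim each $T_i$ has level $<m$. Fix $i$ and suppose for contradiction $T_i$ has level $m$, so $T_i=P[m]$ is projective of level $m$. By property (a), $E_i$ then has the same level $m$ and is relatively projective in the corresponding exceptional sequence $X_\ast$. But every term of an $m$-exceptional sequence in level $m$ has $X_i$ relatively projective by definition, so relative projectivity is automatic and gives no contradiction by itself; instead I use Theorem~\ref{A} (Theorem~\ref{thm: all terms are rel proj or rel inj}): $X_i$ is relatively projective \emph{or} relatively injective. If $X_i$ were relatively injective then $E_i$ would be a level-$m$ relatively injective term, contradicting the hypothesis; if $X_i$ is relatively projective but not relatively injective, then $X_i$ is covered by the terms $X_j$ with $j>i$ by Theorem~\ref{thm: covering criterion for Ek to be not rel injective}, and I need this to contradict $T_i$ being projective of level $m$. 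Here is the cleaner route: if $T_i = P_i[m]$ then by compatibility with the other $T_j$, the vertex $i$ is not in the support of any $T_j$ with $j\neq i$ (using the compatibility description of $P_i[m]$ with $Y[k]$, $k<m$); combined with property (1), which forces $\undim X_i$ to be a combination of the $\undim X_j$, $j>i$, together with $\pm\undim X_i$ itself, one sees $X_i$ cannot be covered by the terms to its right, so by Theorem~\ref{thm: covering criterion for Ek to be not rel injective} it must be relatively injective, and again $E_i$ is a level-$m$ relatively injective term, contradiction.

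Then the converse ($\Rightarrow$): suppose some term $E_i$ has level $m$ and is relatively injective. By Theorem~\ref{A}, $X_i$ is then both relatively projective (automatic in level $m$) and relatively injective, so by property (b) of Theorem~\ref{thm: bijection for m-clusters}, $T_i$ has the same level as $E_i$, namely $m$. Hence $T_i$ is not in $\bigcup_{0\le j<m} mod\text-\Lambda[j]$, and the tuple fails the condition. This direction is the short one once (b) is in hand.

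The main obstacle I anticipate is the first direction, specifically pinning down exactly why a level-$m$ term $T_i$ being projective is incompatible with $X_i$ being relatively projective-but-not-relatively-injective. The subtlety is that ``$E_i$ in level $m$'' already forces relative projectivity of $X_i$, so the content must come from Theorem~\ref{A} promoting this to ``relatively projective \emph{and} relatively injective'' under the extra hypothesis, and then from the support/covering characterization in Section~\ref{ss: char of rel pro and rel inj} (Theorem~\ref{thm: characterize rel proj-rel inj objects as roots} and Corollary~\ref{cor: characterize rel proj-rel inj objects as shifts of projective objects}) to translate ``$T_i$ projective of level $m$'' into a statement about supports of the $X_j$. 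I would organize the argument so that the dichotomy of Theorem~\ref{A}, applied termwise, is the pivot: for any level-$m$ term $E_i$, $X_i$ is always relatively projective, hence $E_i$ is ``level-$m$ relatively injective'' if and only if $X_i$ is rPI, if and only if (by Corollary~\ref{cor: characterize rel proj-rel inj objects as shifts of projective objects}) there is a vertex of the quiver in the support of $X_i$ and in no other $X_j$; and then match this, via property (1) and the compatibility rules, to $T_i$ having level $m$. Once that equivalence ``$E_i$ level-$m$ and rel.\ injective $\iff$ $T_i$ has level $m$'' is established, the corollary is immediate.
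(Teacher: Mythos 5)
Your proposal is correct and takes essentially the same route as the paper: the forward implication via Theorem \ref{thm: bijection for m-clusters}(a) together with the observation that the top vertex of $T_i=P[m]$ lies in the support of $E_i$ but of no $E_j$ with $j>i$ (so $E_i$ is relatively injective by Theorem \ref{thm: covering criterion for Ek to be not rel injective}), and the converse via the fact that level-$m$ terms are relatively projective by definition, so property (b) forces $T_i$ to have level $m$. The only point to tidy is that the compatibility rule you invoke for $P[m]$ only controls the $T_j$ of level $<m$, so in the ($\Leftarrow$) direction you should take $i$ maximal among the level-$m$ indices of $T_\ast$ (the paper instead organizes this as an induction on the length of the sequence).
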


\begin{proof}
Let $T_\ast=(T_s,T_{s+1},\cdots,T_n)$ be an ordered partial $m$-cluster and let $E_\ast=(E_s,\cdots,E_n)$ be the corresponding $m$-exceptional sequence. We will show, by induction on $n-s$, that $T_\ast$ has no terms in level $m$ if and only if $E_\ast$ has no relatively injective terms in level $m$. For $s=n$ this holds: $E_n$ must have level $<m$ since it is relatively injective. And $T_n=E_n$ has level $<m$.

Suppose $s<n$ and $T_s=P_i[m]$ and $i$ is not in the support of any $T_j$ for $j>s$. This implies that $i$ is not in the support of any $E_j$ for $j>s$. But $\undim T_s$ is congruent to $\undim E_s$ modulo the span of $\undim E_j$ for $j>s$. By Lemma \ref{lem: when is Ek relatively injective}, $E_s$ is relatively injective. By Theorem \ref{thm: bijection for m-clusters}(a), $E_s$ has level $m$. So, the correspondence holds in this direction.

Conversely, suppose that $E_s$ is relatively projective and relatively injective with level $m$. By Theorem \ref{thm: covering criterion for Ek to be not rel injective}(b) $T_s$ also has level $m$. Thus $T_\ast$ has no terms in level $m$ if and only if $E_\ast$ has no relatively injective terms in level $m$.
\end{proof}

\begin{exa}
Let $\Lambda$ be a path algebra of type $A_3$ with quiver
\[
	1\rightarrow 2\leftarrow 3.
\]
The 6 indecomposable modules are the projective $P_i$ and injective $I_i$. The simple modules are $I_1, P_2, I_3$. Consider the complete exceptional sequence $(P_3,I_2,I_3)$. We take three possible ways to shift these terms and list the corresponding ordered $m$-cluster, for $m=2$, on its right:
\begin{enumerate}
\item $(P_3,I_2,I_3)\leftrightarrow (P_3,I_2,I_3)$ since these are ext-orthogonal.
\item $(P_3[2],I_2,I_3)\leftrightarrow (I_1[1],I_2,I_3)$
\item $(P_3[1],I_2[1],I_3)\leftrightarrow (P_2[1],P_1[1],I_3)$
\end{enumerate}
Property (b): In all three cases, $T_2$ has the same level as $E_2$ since $E_2=I_2[i]$ is both relatively projective and relatively injective since it is not covered by $E_1,E_3$.

Property (c): In (2) $T_1,E_1$ have different levels. So, $T_1=I_1[1]$ is not a shifted projective and $E_1=P_3[2]$ is relatively projective but not relatively injective with level one more than that of $T_1$. 

Property (a): In (1) and (3), $T_1$ is projective making $E_1$ relatively projective but not relatively injective with the same level as $T_1$. $P_3$ is not relatively injective since there is a monomorphism $P_3\hookrightarrow I_2$. (Also, the first term $E_1$ is relatively injective if and only if it is injective.) In (3), $T_2=P_1[1]$ is projective and the corresponding term $E_2=I_2[1]$ is relatively projective and relatively injective.

This example illustrates that, if $T_s$ is projective, then $E_s$ has the same level and is relatively projective, but $E_s$ might or might not be relatively injective.
\end{exa}

We first review the proof of Theorem \ref{thm: bijection for m-clusters} using the Key Lemma \cite[Lem 2.2.1]{mExcSeq} together with \cite[Prop 2.5.4]{mExcSeq}.

\begin{lem}\label{key lemma}
For any $T[k]\in\cE^m(\Lambda)$ there is a bijection
\[
	\sigma_{T[k]}:\cE^m(T^\perp)\to\cE^{T[k]}.
\]
between $\cE^m(T^\perp)$ and $\cE^{T[k]}$, the set of objects in $\cE^m(\Lambda)$ compatible with $T[k]$. 
This bijection is uniquely determined by the following conditions.
\begin{enumerate}
\item $\sigma(X[i])=X[i]$ if $X[i]$ is compatible with $T[k]$.
\item Otherwise $\sigma(X[i])=Y[j]$ where $(T,Y)$ is the exceptional pair given by braid mutation of $(X,T)$ and $j=i$ or $i-1$, whichever makes
\[
	(-1)^i\undim X-(-1)^j\undim Y
\]
a multiple of $\undim T$.
\end{enumerate}
Furthermore, $A,B$ are compatible in $\cE^m(T^\perp)$ if and only if $\sigma_{T[k]}A,\sigma_{T[k]}B$ are compatible in $\cE^{T[k]}$.
\end{lem}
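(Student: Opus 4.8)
This is \cite[Lem 2.2.1]{mExcSeq}, together with \cite[Prop 2.5.4]{mExcSeq} for the assertion about relative projectivity at level $m$; I indicate the structure of the argument. The plan is to build on the $m=1$ case --- the original Igusa--Todorov construction \cite{IT13} --- by carrying the homological shift (the ``level'') along as extra bookkeeping and using the dimension-vector congruence in condition (2) to pin the level down. Granting the $m=1$ bijection between the exceptional objects of $T^\perp$ and the (levelled) exceptional objects compatible with $T[k]$, the work splits into (i) well-definedness of $\sigma_{T[k]}$, (ii) construction of an inverse, and (iii) the compatibility clause.

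For (i): for $X[i]\in\cE^m(T^\perp)$ the hypothesis $X\in T^\perp$ says precisely that $(X,T)$ is an exceptional pair, so the braid mutation carrying $(X,T)$ to $(T,Y)$ in condition (2) is defined and $Y$ is exceptional. This mutation is realized by one of the three short exact sequences already displayed in the proof of Theorem~\ref{thm: new characterization of rel proj}, and the triangle underlying it gives $\undim X\equiv\pm\undim Y$ modulo $\ZZ\undim T$, the sign recording whether a shift intervenes; in either case exactly one of $j=i$, $j=i-1$ then satisfies the congruence of condition (2), because the two choices differ by a nonzero multiple of $\undim Y$, which is not a multiple of $\undim T$. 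Two further points remain: that $Y[j]$ lands in the fundamental domain \eqref{eq: fundamental domain of Cm} --- the delicate case being $j=m$, where one needs $Y$ relatively projective in its exceptional sequence, which is \cite[Prop 2.5.4]{mExcSeq} and is the $m$-exceptional analogue of the transport of relative projectivity along a braid move recorded in Equation \eqref{eq: Ek proj iff Ek' proj}; and that $Y[j]$ is genuinely compatible with $T[k]$, which is a finite check comparing the relative order of $i$, $j$, $k$ with the $\Hom$/$\Ext$ vanishings read off the mutation sequence.

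For (ii): braid mutation of exceptional pairs is invertible (left and right mutation are mutually inverse), so the inverse of $\sigma_{T[k]}$ has the same two-clause form --- on an object $W[\ell]$ compatible with $T[k]$ that is not already in $\cE^m(T^\perp)$ one mutates the pair involving $T$ the other way and re-chooses the level by the same congruence --- and checking that the two composites are the identity is again a case analysis according to which compatibility clause ($\ell<k$, $\ell=k$, $\ell>k$) holds before and after.

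For (iii): the $m=1$ Key Lemma already matches exceptional pairs inside $T^\perp$ with pairs in $mod\text-\Lambda$ bearing the correct relation to $T$, and the congruence in (2) controls exactly how the levels of two objects $A,B$ move, so the inequalities and equalities among their levels --- hence which of clauses (1)--(3) of compatibility applies --- transform consistently. I expect the main obstacle to be the clause-(3) situation, where $A$ and $B$ end up at the same level and compatibility is Ext-orthogonality rather than an exceptional-pair condition: there one must compare $\Ext_\Lambda$-groups before and after the braid mutation by $T$ using the long exact sequences attached to the three mutation shapes, while keeping the fundamental-domain and relative-projectivity bookkeeping consistent at the level-$m$ boundary. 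Once that comparison is in hand, the equivalence of compatibility follows.
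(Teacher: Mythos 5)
The paper gives no proof of this lemma: it simply quotes it from \cite[Lem.\ 2.2.1]{mExcSeq} (with \cite[Prop.\ 2.5.4]{mExcSeq} covering the level-$m$ boundary), which is exactly the attribution you make, so your proposal matches the paper's treatment and your supplementary sketch is consistent with how the cited argument runs. The one slip worth noting is that for arbitrary $X\in T^\perp$ the braid mutation of $(X,T)$ also admits the fourth shape $0\to X\to T^s\to Y\to 0$ (this is precisely the $j=i-1$ case of Proposition \ref{prop: Y projective means X rel proj and inj}), which is absent from the three sequences in the proof of Theorem \ref{thm: new characterization of rel proj} only because $E_k$ is assumed relatively injective there; the congruence in condition (2) still determines $j$ uniquely in that case.
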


This lemma, proved in \cite{mExcSeq}, implies the following.

\begin{prop}\label{prop: Y projective means X rel proj and inj}
In the correspondence in Lemma \ref{key lemma} we have the following.
\begin{enumerate}
\item If $j=i-1$ then $X$ is a projective object of $T^\perp$.
\item If $Y$ is a projective $\Lambda$-module, then $i=j$ and $X$ is projective in $T^\perp$.
\end{enumerate}
\end{prop}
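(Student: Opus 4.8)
The plan is to reduce both statements to the single claim that $\Ext_\Lambda(X,Z)=0$ for every $Z\in T^\perp$. Since $T^\perp$ is an exactly embedded, extension-closed abelian subcategory of $mod\text-\Lambda$, one has $\Ext_{T^\perp}(X,Z)=\Ext_\Lambda(X,Z)$, so this vanishing is precisely the assertion that $X$ is a projective object of $T^\perp$. I would then run through the possible shapes of the braid mutation taking $(X,T)$ to $(T,Y)$, reading off the relevant vanishing in each from a single application of $\Hom_\Lambda(-,Z)$.

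The trivial case is when $X[i]$ is already compatible with $T[k]$: then $Y=X$ and $j=i$, so (1) is vacuous, while in (2) the hypothesis makes $X$ a $\Lambda$-projective module lying in $T^\perp$, hence projective there. Otherwise $\Hom_\Lambda(X,T)\neq 0$ or $\Ext_\Lambda(X,T)\neq 0$, and $Y$ is produced by exactly one of
\[
0\to T^s\to Y\to X\to 0,\qquad 0\to Y\to X\to T^s\to 0,\qquad 0\to X\to T^s\to Y\to 0,
\]
with $s=\dim\Ext_\Lambda(X,T)$ in the first case and $s=\dim\Hom_\Lambda(X,T)$ in the other two (in the second the canonical map $X\to T^s$ is onto, in the third it is a monomorphism); these are the cases (2), (3) and the ``excluded'' case in the proof of Theorem \ref{thm: new characterization of rel proj}. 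Comparing dimension vectors, $\undim X-\undim Y\in\ZZ\,\undim T$ in the first two cases, whereas in the third only $\undim X+\undim Y\in\ZZ\,\undim T$ holds. Applying the Euler form $\langle\undim T,-\rangle$ and using $X\in T^\perp$ together with $\Ext_\Lambda(T,T)=0$ shows that $\undim X$ and $\undim T$ are linearly independent, so the linear condition in Lemma \ref{key lemma} pins down $j$ with no ambiguity: $j=i$ in the first two cases and $j=i-1$ in the third. Thus ``$j=i-1$'' is equivalent to being in the third case.

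For (1), in the third case I would apply $\Hom_\Lambda(-,Z)$ to $0\to X\to T^s\to Y\to 0$ with $Z\in T^\perp$: as $\Ext_\Lambda(T^s,Z)=0$ and $\Lambda$ is hereditary, $\Ext_\Lambda(X,Z)$ is a quotient of $\Ext_\Lambda(T^s,Z)=0$, so it vanishes. For (2), if $Y$ is a $\Lambda$-projective module then the third case is impossible, since its sequence would split, realizing the indecomposable $X$ as a summand of $T^s$ and forcing $X\cong T$, which contradicts $\Hom_\Lambda(T,X)=0\neq\End_\Lambda(T)$. Hence $j=i$ and we are in one of the first two cases; applying $\Hom_\Lambda(-,Z)$ for $Z\in T^\perp$ and using $\Hom_\Lambda(T^s,Z)=0=\Ext_\Lambda(T^s,Z)$ together with $\Ext_\Lambda(Y,Z)=0$ (as $Y$ is $\Lambda$-projective) squeezes $\Ext_\Lambda(X,Z)$ between two zero terms of the six-term sequence, so it vanishes and $X$ is projective in $T^\perp$.

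The one step that needs genuine care is the identification of ``$j=i-1$'' with the third mutation shape; this rests entirely on the linear independence of $\undim X$ and $\undim T$, which is what makes the linear condition of Lemma \ref{key lemma} select $j$ unambiguously. Everything afterwards is routine homological bookkeeping with the six-term exact sequence over the hereditary algebra $\Lambda$.
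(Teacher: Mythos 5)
Your proof is correct, but it takes a genuinely different route from the paper's. The paper deduces both parts from its earlier structural results: for (1) it observes that $j=i-1$ forces the sequence $0\to X\to T^p\to Y\to 0$, so $X$ is not relatively injective in $(E_1,\dots,E_{n-2},X,T)$, and then invokes Theorem \ref{thm: all terms are rel proj or rel inj} to conclude $X$ is relatively projective; for (2) it notes that $Y=P_k$ projective means the vertex $k$ is not in the support of the other terms and applies Theorem \ref{thm: characterize rel proj-rel inj objects as roots}. You instead reduce everything to the single vanishing $\Ext_\Lambda(X,Z)=0$ for $Z\in T^\perp$ and verify it directly from the six-term sequence attached to whichever of the three mutation shapes occurs; your identification of ``$j=i-1$'' with the shape $0\to X\to T^s\to Y\to 0$ via the linear independence of $\undim X$ and $\undim T$ (Euler form argument) is exactly the point the paper asserts with the phrase ``this is the only mutation which makes $\undim X+\undim Y$ a multiple of $\undim T$,'' and you justify it more carefully. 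Your approach buys independence from the paper's Theorems \ref{thm: all terms are rel proj or rel inj} and \ref{thm: characterize rel proj-rel inj objects as roots} (a purely homological, self-contained argument), and it also supplies the claim $i=j$ in part (2) explicitly via the splitting/Krull--Schmidt argument --- a step the paper's proof leaves implicit. The paper's approach is shorter given the machinery already in place, and it exhibits the proposition as a corollary of the support/root characterization, which is thematically how the paper wants it read.
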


\begin{proof}
We take the nontrivial case $X[i]\neq Y[j]$ so that $(T,Y)$ is a braid mutation of $(X,T)$. This means we can complete both pairs using the same $n-2$ objects to make complete exceptional sequences
\[
	E_\ast=(E_1,\cdots,E_{n-2},X,T),
\]
\[
	E_\ast'=(E_1,\cdots,E_{n-2},T,Y).
\]
If $Y$ is a projective $\Lambda$ module, say $Y=P_k$, then $k$ is not in the support of the other terms. So, by Theorem \ref{thm: characterize rel proj-rel inj objects as roots}, $X$ is relatively projective and relatively injective in $E_\ast$. Thus, $X$ is a projective object of $T^\perp$ as required.

If $j=i-1$, the mutation $(X,T)\to (T,Y)$ is given by an exact sequence
\[
	0\to X\to T^p\to Y\to 0
\]
since this is the only mutation which makes $\undim X+\undim Y$ a multiple of $\undim T$. So, $X$ is not relatively injective (otherwise, the map $X\to T^p$ would split). By Theorem \ref{thm: all terms are rel proj or rel inj}, $X$ is relatively projective in $E_\ast$ and thus a projective object of $T^\perp$, as required.
\end{proof}

We can now prove Theorem \ref{thm: bijection for m-clusters} following the same outline as the proof of \cite[Thm 2.1.1]{mExcSeq}:

\begin{proof}[Proof of Theorem \ref{thm: bijection for m-clusters}]
By induction on $t=n-s+1$, we construct a bijection with the required properties:
\[
	\theta_t:\left\{
	\begin{matrix}\text{$t$-tuples of compatible}\\
	\text{objects in $\cE^m(\Lambda)$}
	\end{matrix}
	\right\}
	\to
	\left\{
	\begin{matrix}\text{$m$-exceptional sequences}\\
	\text{of length $t$ for $\Lambda$}
	\end{matrix}
	\right\}
\]
For $t=1$, $\theta_1$ is the identity map. So, assume $t\ge2$.

Let $(T_s,\cdots,T_n)$ be a $t$-tuple of compatible objects in $\cE^m(\Lambda)$ and let $T_n=T[k]$ be the last object. Then $T_i$ for $i<n$ are compatible objects of $\cE^{T[k]}$. Using the inverse of the bijection 
\[
	\sigma_{T[k]}:\cE^m(T^\perp)\cong \cE^{T[k]}
\]
from Lemma \ref{key lemma}, we get
\[
	\sigma_{T[k]}^{-1}(T_s,\cdots,T_{n-1})=(X_s,\cdots,X_{n-1})
\]
a $(t-1)$-tuple of compatible elements of $\cE^m(T^\perp)$. By induction on $t$ we have a bijection:
\[
	\theta_{t-1}:\left\{
	\begin{matrix}\text{$(t-1)$-tuples of compatible}\\
	\text{objects in $\cE^m(T^\perp)$}
	\end{matrix}
	\right\}
	\to
	\left\{
	\begin{matrix}\text{$m$-exceptional sequences}\\
	\text{of length $t-1$ for $T^\perp$}
	\end{matrix}
	\right\}
\]
Let
\[
	\theta_{t-1}(X_s,\cdots,X_{n-1})=(E_s,\cdots,E_{n-1}).
\]
Then $(E_s,\cdots,E_{n-1},E_n)$, with $E_n=T_n=T[k]$ is an $m$-exceptional sequence for $\Lambda$ and we define this to be equal to $\theta_t(T_s,\cdots,T_n)$. The inverse of $\theta_t$ is given in the obvious way. (See the proof of \cite[Thm 2.1.1]{mExcSeq}.)

Now we verify conditions (a), (b), (c). For (a), suppose that $T_s=P[j]$ where $P$ is a projective $\Lambda$-module. Then, by Proposition \ref{prop: Y projective means X rel proj and inj}, $X_s=X[j]$ where $X$ is a projective object of $T^\perp$. By induction on $t$, this implies that $E_s$ is a relatively projective object with the same level $j$ in the exceptional sequence $(E_s,\cdots,E_{n-1})$ in $T^\perp$ and thus it is relatively projective in $(E_s,\cdots,E_n)$.

For (c), suppose that $T_s$ and $E_s$ have different levels. Then either 
\begin{enumerate}
\item $X_s$, $T_s$ have different levels or 
\item $X_s$, $T_s$ have the same level and $E_s$ has a different level. 
\end{enumerate}
In the second case, we have by induction that $E_s$ is relatively projective with level one more than the level of $X_s$ which is the same as the level of $T_s$. So, the required condition holds in Case (2).

In Case (1), $X_s$ is a (shifted) projective object of $T^\perp$. By induction on $t$, this makes $E_s$ relatively projective with the same level as $X_s$. Thus $E_s$ is relatively projective of level one more than that of $T_s$. 

So, this condition holds in both cases. The linear conditions (1) and (2) hold by construction. And they imply (b): If $E_s$ is both relatively projective and relatively injective, then it has a vertex in its support which is not in the support of any other $E_i$. In order for $\undim T_s$ to be congruent to $\undim E_s$ modulo the other terms, $T_s,E_s$ must have the same sign and thus the same level.

All conditions are verified and the theorem follows.
\end{proof}

\section{Garside element action and relative projectivity/injectivity}

Let $\Delta$ denote the Garside element of the braid group. This is given by
\[
	\Delta=\delta_n \delta_{n-1} \cdots \delta_2
\]
where $\delta_k=\sigma_1\sigma_2\cdots \sigma_{k-1}$. On complete exceptional sequences $\delta_k$ is given by deleting the term $E_k$ and inserting at the beginning $E_k'=\tau_k E_k$ where $\tau_k$ is Auslander-Reiten translation in the category $\cA_k=(E_{k+1}\oplus\cdots E_n)^\perp$:
\begin{equation}\label{eq: delta k of East}
	\delta_k(E_1,\cdots,E_k,\cdots,E_n)=(E_k',E_1,\cdots,\widehat{E_k},\cdots,E_n).
\end{equation}
See \cite{IS} for more details about the action of $\delta_k$ and $\Delta$ on complete exceptional sequences.

In \cite{CI} we pointed out that the ``reverse Garside'' $D\Delta$ takes relative projectives to relative projective. This is equivalent to statement (1) in the theorem below. In \cite{IS} we showed in some cases that $\Delta$ takes projective modules to ``roots'' and roots to injective modules in an exceptional sequence (statements (2), (3) in the theorem below). Extending this pattern to other terms in an exceptional sequence gives a good summary of some of our results. In this statement we define $E_k$ to be a \emph{root} if and only it is both relatively projective and relatively injective. (See Theorem \ref{thm: characterize rel proj-rel inj objects as roots}.)

\begin{thm}\label{thm E}
For any complete exceptional sequence $E_\ast=(E_1,\cdots,E_n)$, let
\[
	\Delta(E_1,\cdots,E_n)=(E_n',E_{n-1}',\cdots,E_1')
\]
where each $E_k=\tau_k E_k$. (Also, $E_1'=E_1$.) Then
\begin{enumerate}
\item $E_k$ is relatively projective in $E_\ast$ if and only if $E_k'$ is relatively injective in $\Delta E_\ast$.
\item $E_k$ is a projective module if and only if $E_k'$ is a root.
\item $E_k$ is a root if and only if $E_k'$ is an injective module.
\item $E_k$ is relatively injective but not relatively projective if and only if $E_k'$ is relatively projective but not relatively injective.
\end{enumerate}
\end{thm}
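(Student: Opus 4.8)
The plan is to reduce everything to a single structural input: the action of the full Garside element $\Delta$ on a complete exceptional sequence reverses the order of the terms and replaces each $E_k$ by its relative AR-translate $E_k' = \tau_k E_k$ computed in $\cA_k$ (with $E_1' = E_1$ the unchanged projective-side term). Concretely, I would first record that $\Delta = \delta_n\delta_{n-1}\cdots\delta_2$ acts by iterating the single moves in \eqref{eq: delta k of East}, and that the net effect on $E_\ast = (E_1,\dots,E_n)$ is exactly $\Delta E_\ast = (E_n', E_{n-1}',\dots, E_1')$ as stated; this is the content of \cite{IS} and I would cite it rather than re-derive it. The key observation to exploit is that when $E_k$ sits in position $k$ of $E_\ast$, its ambient relative category on the ``injective side'' is $\cA_k = (E_{k+1}\oplus\cdots\oplus E_n)^\perp$, whereas after applying $\Delta$ the term $E_k'$ occupies a position whose ``projective side'' perpendicular category — the one defining relative injectivity of $E_k'$ in $\Delta E_\ast$ — is built from exactly the same set of objects $\{E_{k+1},\dots,E_n\}$ (now appearing to the left of $E_k'$ after translation). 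So $\cA_k$ for $E_\ast$ and $\cB$ for $E_k'$ in $\Delta E_\ast$ are related by the duality $\cB_k = {}^\perp\cA_{k-1}$ recorded in Section 1, and $\tau_k$ is precisely the equivalence that intertwines ``projective in $\cA_k$'' with ``injective in the dual category.''

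With that dictionary in hand, statement (1) is the assertion that $\tau_k$ carries the relatively projective objects of $\cA_k$ to the relatively injective objects of the corresponding wide subcategory on the other side; this is essentially the observation from \cite{CI} that the reverse Garside preserves relative projectives, transported across the order-reversal built into $\Delta$. I would prove (1) by combining Lemma \ref{lem: when is Ek relatively projective} and Lemma \ref{lem: when is Ek relatively injective}: $E_k$ is relatively projective iff no $X \in \cA_{k-1}$ maps onto it, and applying $\tau_k$ (an equivalence $\cA_k \to \cA_k$ up to the relevant shift) turns a non-splitting epimorphism $X \onto E_k$ into a non-splitting monomorphism $E_k' \cof Y'$ with $Y'$ in the appropriate $\cB$, which by Lemma \ref{lem: when is Ek relatively injective} is exactly the failure of relative injectivity of $E_k'$. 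Statement (4) is then formally the ``complementary'' half of (1): since by Theorem \ref{thm: all terms are rel proj or rel inj} every term is relatively projective or relatively injective (or both), ``relatively injective but not relatively projective'' for $E_k$ is the exact logical complement, within the injective terms, of ``relatively projective,'' and (1) translates this to ``relatively projective but not relatively injective'' for $E_k'$ — I would just chase the four-way case split $\{$rP only, rI only, both, neither$\}$ through (1), using that ``neither'' is empty.

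For (2) and (3), I would invoke the characterization of roots from Theorem \ref{thm: characterize rel proj-rel inj objects as roots} and Corollary \ref{cor: characterize rel proj-rel inj objects as shifts of projective objects}: $E_k'$ is a root in $\Delta E_\ast$ iff there is a vertex $v_i$ lying in the support of $E_k'$ and in no other term of $\Delta E_\ast$, equivalently (by the Corollary) iff $(E_n',\dots,\widehat{E_k'},\dots,E_1',P_i)$ is an exceptional sequence. I would show $E_k$ is a projective $\Lambda$-module iff this holds. One direction: if $E_k = P_i$ then $E_n$ (the last term of $E_\ast$) forces $i$ out of the supports of $E_j$ with $j < n$ already at the projective end, and tracking through $\Delta$ shows $E_k'$ acquires the ``lonely vertex'' property; the cleanest route is probably to iterate Corollary \ref{cor: roots shift to roots} / the single-mutation analysis, exactly as in the proof of Theorem \ref{thm: characterize rel proj-rel inj objects as roots} where the mutated sequence ending in $Y = P_i$ is used. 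Statement (3) is the mirror image: $E_k$ is a root (both rP and rI) iff $E_k'$ is an injective module, which follows by applying (2) to the opposite algebra $\Lambda\op$ — there $\Delta$ becomes its own inverse-flavored counterpart, injectives become projectives, and roots are self-dual — or, avoiding the opposite-algebra bookkeeping, by observing that (1) already upgrades ``$E_k$ root'' (rP and rI) to ``$E_k'$ rI'', and then a support argument identical to the one for (2) pins $E_k'$ down to an actual injective module.

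The main obstacle I anticipate is \textbf{bookkeeping the position shifts}: after applying $\Delta$ the term $E_k'$ no longer sits in a position simply indexed by $k$, and one must verify carefully that the wide subcategory governing $E_k'$'s relative injectivity in $\Delta E_\ast$ really is the $\tau_k$-image of $\cA_k$, not of $\cA_{k-1}$ or $\cA_{k+1}$. Getting this index matching exactly right — and checking the boundary cases $k=1$ (where $E_1' = E_1$ and the statements degenerate to ``$E_1$ projective iff $E_1$ projective'', etc.) and $k=n$ (where $E_n$ is automatically relatively injective, so (1) reads ``$E_n$ rP iff $E_n'$ rI'' and must match the fact that $E_n' $ lands in the leftmost, ``always relatively projective after completion''-type position) — is where the real care is needed; the homological content itself is light once the dictionary is fixed.
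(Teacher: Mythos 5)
Your proposal follows essentially the same route as the paper: the key identification $\cA_k={}^\perp(E_n'\oplus\cdots\oplus E_{k+1}')$, the fact that $\tau_k$ exchanges projective and injective objects of this common wide subcategory (giving (1), and then (4) formally via Theorem \ref{thm: all terms are rel proj or rel inj}), and Corollary \ref{cor: characterize rel proj-rel inj objects as shifts of projective objects} applied to the intermediate exceptional sequences for (2) and (3). The one caution is in (1): state the step as ``$E_k$ is projective in $\cA_k$ iff $E_k'=\tau_kE_k$ is injective in $\cA_k$'' rather than as $\tau_k$ converting a non-split epimorphism into a non-split monomorphism, since $\tau$ is not an exact equivalence of the abelian category and does not literally transform such sequences.
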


\begin{proof}
The key point is that $\cA_k$ is equal to
\[
	\cB_k':=\,^\perp(E_n'\oplus\cdots\oplus E_{k+1}').
\]
Thus (1) $E_k$ is relatively projective in $E_\ast$ if and only if $E_k$ is a projective object of $\cA_k=\cB_k'$ if and only if $E_k'=\tau_kE_k$ is an injective object of $\cA_k=\cB_k'$ if and only if $E_k'$ is relatively injective in $\Delta E_\ast$.

To show (2) we examine the exceptional sequences
\begin{equation}\label{eq: first step of Delta}
	(E_n',\cdots,E_{k+1}',E_1,\cdots,E_k)
\end{equation}
\begin{equation}\label{eq: second step of Delta}
	(E_n',\cdots,E_{k+1}',E_k', E_1,\cdots,E_{k-1})
\end{equation}
These are exceptional sequences since $\cA_k=\cB_k'$. By Corollary \ref{cor: characterize rel proj-rel inj objects as shifts of projective objects} (3), $E_k$ is projective if and only if $E_k'$ is a root in \eqref{eq: second step of Delta} if and only if $E_k'$ is a root in $\Delta E_\ast$.

(3) follows from Equation \eqref{eq: delta k of East} which shows, by Corollary \ref{cor: characterize rel proj-rel inj objects as shifts of projective objects} (4), that $E_k'$ is an injective module if and only if $E_k$ is a root in $E_\ast$.

(4) is equivalent to (1) by Theorem \ref{thm: all terms are rel proj or rel inj}.
\end{proof}

\section*{Acknowledgments}

The authors thank Theo Douvropoulos for many discussion about this problem. The second author thanks the Simons Foundation for its support: Grant \#686616.

\end{document}